\documentclass[a4paper,12pt]{amsart}
\usepackage{amssymb,amscd}
\usepackage[cp1251]{inputenc}

\usepackage{amsmath}
\usepackage{amsthm}
\usepackage{mathtext}
\usepackage{euscript}

\usepackage{graphicx}
\graphicspath{}
\DeclareGraphicsExtensions{.pdf,.png,.jpg,.jpeg}

\unitlength=1mm
\linethickness{0.5pt}

\emergencystretch=5pt
\tolerance=600

\usepackage[english]{babel}





\theoremstyle{plain}

\newtheorem{theo}{Theorem}[section]
\newtheorem{prop}[theo]{Proposition}
\newtheorem{lemm}[theo]{Lemma}
\newtheorem{coro}[theo]{Corollary}

\theoremstyle{definition}
\newtheorem{exam}[theo]{Example}
\newtheorem{constr}[theo]{Construction}
\newtheorem{defi}[theo]{Definition}

\newtheorem{prob}[theo]{Problem}

\theoremstyle{remark}
\newtheorem*{rema}{Remark}

\numberwithin{equation}{section}


\newcommand{\field}[1]{\mathbb{#1}}
\def\C{\field{C}}

\newcommand{\R}{\field{R}}

\DeclareMathOperator{\Tor}{Tor}
\DeclareMathOperator{\vc}{vc}
\DeclareMathOperator{\ko}{{\bf{k}}}
\DeclareMathOperator{\rk}{rk}
\DeclareMathOperator{\cc}{cc}
\DeclareMathOperator{\MF}{{\bf{MF}}}
\DeclareMathOperator{\conv}{conv}
\DeclareMathOperator{\sk}{sk}
\DeclareMathOperator{\reg}{reg}

\def\As{\mbox{\it As}}
\def\Pe{\mbox{\it Pe}}
\def\Cy{\mbox{\it Cy}}
\def\St{\mbox{\it St}}

\newcommand{\zp}{\mathcal Z_P}

\newcommand{\zk}{\mathcal Z_K}
\DeclareMathOperator{\Dj}{DJ}
\DeclareMathOperator{\U}{U}

\DeclareMathOperator{\K}{\mbox{\textit{K}}}

\begin{document}

\title[Topology of polyhedral products]{Topology of polyhedral products over simplicial multiwedges}

\author{Ivan Limonchenko}

\thanks{The author was supported by the General Financial Grant from the China Postdoctoral Science Foundation, grant no.~2016M601486.
}

\subjclass[2010]{Primary 13F55, 55S30, Secondary 52B11}

\address{School of Mathematical Sciences, Fudan University, Handan Road 220, Yangpu, Shanghai, 200433, P.R. China}
\email{ilimonchenko@fudan.edu.cn}

\keywords{Polyhedral product, moment-angle manifold, simplicial multiwedge, Stanley--Reisner ring, Massey product, graph-associahedron}

\maketitle
\begin{abstract}
We prove that certain conditions on multigraded Betti numbers of a simplicial complex $K$ imply existence of a higher Massey product in cohomology of a moment-angle-complex $\zk$, which contains a unique element (a \emph{strictly defined} product). Using the simplicial multiwedge construction, we find a family $\mathcal{F}$ of polyhedral products being smooth closed manifolds such that for any $l, r\geq 2$ there exists an $l$-connected manifold $M\in\mathcal F$ with a nontrivial strictly defined $r$-fold Massey product in $H^{*}(M)$. As an application to homological algebra, we determine a wide class of triangulated spheres $K$ such that a nontrivial higher Massey product of any order may exist in Koszul homology of their Stanley--Reisner rings. As an application to rational homotopy theory, we establish a combinatorial criterion for a simple graph $\Gamma$ to provide a (rationally) formal generalized moment-angle manifold $\zp^{J}=(\b{D}^{2j_{i}},\b{S}^{2j_{i}-1})^{\partial P^*}$, $J=(j_{1},\ldots,j_m)$ over a graph-associahedron $P=P_{\Gamma}$ and compute all the diffeomorphism types of formal moment-angle manifolds over graph-associahedra. 
\end{abstract}

\section{Introduction}

A Massey product is a multivalued partially defined higher operation in cohomology of a space, which generalizes the notion of a cup product.
Since 1957, when the triple Massey product in the cohomology ring $H^{*}(X)$ of a topological space $X$ was introduced and applied~\cite{M-U} in order to prove the Jacobi identity for Whitehead products in $\pi_{*}(X)$, the higher order (ordinary) Massey operation, introduced in~\cite{Mass}, and its generalization, matric Massey product~\cite{M}, found numerous applications in geometry, topology, group theory, and other areas of research. 

Among the most remarkable of these applications, we can mention the one, which belongs to rational homotopy theory~\cite{sull77}: a nontrivial higher Massey product in (singular) cohomology ring $H^*(X;\mathbb{Q})$ is an obstruction to the (rational) formality of $X$. This allows one to prove that certain complex manifolds are non-K\"ahler due to the classical result of~\cite{D-G-M-S} on formality of K\"ahler manifolds, as well as to construct nonformal manifolds arising in symplectic geometry~\cite{Ba-Ta}, the theory of subspace arrangements~\cite{DS}, and toric topology~\cite{BaskM}.    

Since the pioneering work~\cite{da-ja91} appeared, toric topology has become a branch of geometry and topology that provides algebraic and equivariant topology, symplectic geometry, combinatorial commutative algebra, enumerative combinatorics, and polytope theory with a class of new objects such as small covers and quasitoric manifolds~\cite{da-ja91}, (generalized) moment-angle-complexes $\zk$~\cite{bu-pa00-2} and polyhedral products $({\bf{X}},{\bf{A}})^K$~\cite{BBCG10}, on which the general theory can be worked out explicitly and a number of remarkable properties of toric spaces with applications in other areas of research can be obtained. 

Moment-angle-complexes $\zk$ over simplicial complexes $K$ and moment-angle manifolds $\zp$ of simple polytopes $P$ have become the key objects of study in toric topology, and there have already been shown various connections of these spaces with objects arising in different areas of geometry and topology. Most of these developments have been recently summarized in the fundamental monograph~\cite{TT}. 

In particular, it was proved in~\cite{bu-pa00-2} that a moment-angle-complex $\zk$ is a deformation retract of a complement $\U(K)$ of the coordinate subspace arrangement in $\C^m$ corresponding to $K$, therefore, being homotopy equivalent to such a complement. Furthermore, a moment-angle manifold $\zp$ of an $n$-dimensional simple polytope $P$ with $m$ facets is a smooth $(m+n)$-dimensional 2-connected manifold embedded into $\C^m$ with a trivial normal bundle as a nondegenerate intersection of Hermitian quadrics. 

A $\mathbb{T}^m$-action on $\zp$ is induced from the standard (coordinatewise) action of $\mathbb{T}^m$ on $\C^m$, and the orbit space of this action is the convex simple polytope $P$ itself. Moreover, when the quasitoric manifold $M_P$ over $P$ exists (in particular, when $P$ has a Delzant realization, and one of the quasitoric manifolds over $P$ is then its nonsingular projective toric variety $V_P$), $\zp$ is a total space of a principal $\mathbb{T}^{m-n}$-bundle over $M_P$ and a composition of the latter bundle with a projection of $M_P$ onto the orbit space of the $\mathbb{T}^n$-action on it coincides with the projection of $\zp$ onto the orbit space of the big torus $\mathbb{T}^m$ action mentioned above.

In a series of more recent works~\cite{GT,G-T13,IK,G-P-T-W,BG} the topological properties of moment-angle-complexes $\zk$ were related to the algebraic properties of their Stanley--Reisner algebras $\ko[K]$ (over a field, or a ring with unit $\ko$). One of such connections is provided by the notion of a Golod ring, which appeared firstly in homology theory of local rings in 1960s, see~\cite{Go}, and has recently acquired the following meaning in toric topology (see Theorem~\ref{zkcoh}): a face ring $\ko[K]$ is called \emph{Golod} (over $\ko$) if multiplication (cup product) and all higher Massey products in $H^{*}(\zk;\ko)$ are vanishing. An up-to-date comprehensive survey on homotopy theory of polyhedral products with applications to the problem of finding conditions on $K$ that imply Golodness of $\ko[K]$ (over $\ko$) can be found in~\cite{GT2}.

It turned out that most of the classes of Golod complexes $K$ that arise in the way described above produce $\zk$ homotopy equivalent to a suspension space (or, even to a wedge of spheres). Furthermore, there have already been established several classes of $K$ (or $P$) for which $\zk$ (resp. $\zp$) is homeomorphic to a connected sum of products of spheres thus also being formal spaces~\cite{bo-me06,G-LdM,G-P-T-W}. Later on, several rational models for toric spaces, among which are moment-angle-complexes, Davis-Januszkiewicz spaces, and quasitoric manifolds, were constructed~\cite{p-r-v04,DS,P-R} and, in particular, it was proved that all quasitoric manifolds are formal~\cite{P-R}.

On the other hand, it was first shown in~\cite{BaskM} that there exist polyhedral products having nontrivial higher Massey products in cohomology and, therefore, being nonformal. Combinatorial criterion for a graph of a simplicial complex $K$ to provide a moment-angle complex $\zk$ with a nontrivial triple Massey product of 3-dimensional classes in $H^*(\zk)$ was proved in~\cite{DS}. Using this result, we prove a necessary and sufficient condition in terms of combinatorics of a simple graph $\Gamma$ to provide a moment-angle manifold $\zp$ over a graph-associahedron $P=P_{\Gamma}$ with a nontrivial triple Massey product in $H^*(\zp)$, see Lemma~\ref{MasseyGA}. Recently, it was shown in~\cite{Zh} that there exists a nontrivial triple Massey product in $H^*(\zp)$ for any Pogorelov polytope $P$ (the latter class of 3-dimensional flag simple polytopes contains, in particular, all fullerenes). First examples of polyhedral products with nontrivial $n$-fold Massey products in cohomology for any $n\geq 2$ were constructed in~\cite{L2,L1}.

The simplicial multiwedge operation (or, $J$-construction) was introduced in the framework of toric topology in~\cite{BBCG15} and was then used to prove that generalized moment-angle-complexes of certain types are homeomorphic to (real) moment-angle-complexes, see Theorem~\ref{homeoBBCG}. Therefore, a natural problem arises: to determine a class of generalized moment-angle-complexes that are (rationally) formal, see Problem~\ref{probFormal}, and to construct a nontrivial $n$-fold Massey product for any $n\geq 2$ with a possibly small indeterminacy in (rational) cohomology of generalized moment-angle-complexes. In this paper we are going to deal with the above mentioned problem.

The structure of the paper is as follows. In Section 2 we give a survey of the main definitions, constructions, and results concerning simplicial complexes, $J$-construction, and polyhedral products that we need in order to state and prove our main results. 

In Section 3 we show that certain conditions on the induced subcomplexes in a simplicial complex $K$, more precisely, vanishing of particular multigraded Betti numbers of $K$ imply existence of a strictly defined higher Massey product in $H^*(\zk)$, see Lemma~\ref{strictMassey}. Then we apply this result and the simplicial multiwedge construction in order to find a family of generalized moment-angle manifolds such that for any $l, r\geq 2$ this family contains an $l$-connected manifold $M$ with a nontrivial strictly defined $r$-fold Massey product in $H^{*}(M)$, see Theorem~\ref{mainMassey}. It turns out that the underlying polytopes are 2-truncated cubes, therefore, they are Delzant and thus any nonformal toric space $M$ from our family is a total space of a principal toric bundle over a nonsingular projective toric variety. The latter one is a formal manifold and is equivariantly symplectomorphic to a compact connected symplectic manifold with an effective Hamiltonian action of a half-dimensional compact torus~\cite{delz88}. 

Section 4 is devoted to applications of our main results on higher Massey products to homological algebra and rational homotopy theory. As an application to homological algebra, we determine a wide class of triangulated spheres $K$ such that a nontrivial higher Massey product of any order may exist in Koszul homology of their Stanley--Reisner rings, see Theorem~\ref{coromainMassey} and Proposition~\ref{MasseyDegrees}. As an application to rational homotopy theory, we establish a combinatorial criterion for a simple graph $\Gamma$ to provide a (rationally) formal generalized moment-angle manifold $\zp^{J}=(\b{D}^{2j_{i}},\b{S}^{2j_{i}-1})^{\partial P^*}$, $J=(j_{1},\ldots,j_m)$ over a graph-associahedron $P=P_{\Gamma}$ and compute all the diffeomorphism types of formal moment-angle manifolds over graph-associahedra, see Theorem~\ref{mainFormal}. 

We are grateful to Anthony Bahri, Martin Bendersky, and Taras Panov for drawing our attention to studying nontrivial higher Massey products in cohomology of generalized moment-angle-complexes. We also thank Lukas Katth\"an for fruitful discussions on the combinatorial commutative algebra of face rings and Massey products in their Tor-algebras.

\section{Moment-angle-complexes and simplicial multiwedges}

We start with the following basic definition.

\begin{defi}
An \emph{(abstract) simplicial complex} $K$ on a vertex set $[m]=\{1,\ldots,m\}$ is a subset of $2^{[m]}$ such that if $\sigma\in K$ and $\tau\subseteq\sigma$, then $\tau\in K$.

The elements of $K$ are called its \emph{simplices}, and the maximal dimension of a simplex $\sigma\in K$ is the \emph{dimension} of $K$ and is denoted by $\dim K=n-1$, where $\sigma=\Delta^{n-1}$. Finally, for any vertex set $I\subset [m]$ a subset of $K$ that equals the intersection $K\cap 2^{I}$ is obviously a simplicial complex itself (on the vertex set $I$). It is called \emph{an induced subcomplex} in $K$ (on $I$) and denoted by $K_{I}$.
\end{defi}

In what follows we assume that there are no \emph{ghost vertices} in $K$, that is,  $\{i\}\in K$ for every $1\leq i\leq m$. Note that a simplicial complex $K$ is a poset, the natural ordering is by inclusion. Thus, $K$ (and any of its induced subcomplexes $K_{I}, I\subset [m]$) is defined by its maximal (w.r.t. inclusion) simplices and $\dim K$ is equal to the dimension of one of them. If all maximal simplices of $K$ have the same dimension, then $K$ is called a \emph{pure} simplicial complex. 

\begin{exam}
In what follows we denote by $P$ an $n$-dimensional convex simple polytope with $m$ \emph{facets} $F_{1},\ldots,F_{m}$, i.e. faces of codimension 1. In this paper we are interested only in its face poset structure (i.e. its combinatorial equivalence class), not in its particular embedding in the ambient Euclidean space $\R^n$. Consider the nerve of the (closed) covering of $\partial P$ by all the facets $F_{i}, 1\leq i\leq m$. The resulting simplicial complex on the vertex set $[m]$ will be called the \emph{nerve complex} of the simple polytope $P$ and denoted by $K_P$. Note that the geometric realization of $K_P$ is the boundary of the simplicial polytope $P^*$ combinatorially dual to $P$. It is obviously a pure simplicial complex and its dimension is equal to $n-1$.  
\end{exam}

One can see easily that a simplicial complex $K$ can either be defined by all of its maximal simplices, or, alternatively, by all of its minimal non-faces, that is, elements $I\in 2^{[m]}\backslash K$, minimal w.r.t. inclusion. By the above definition, the latter is equivalent to the property that $I\notin K$, but any of its proper subsets is a simplex in $K$. We denote the set of minimal non-faces of $K$ by $\MF(K)$. Therefore, $I\in\MF(K)$ if and only if $K_I$ is a boundary of a simplex $\Delta_I$ on the vertex set $I$. Using this set, we can easily define the following combinatorial operation on the sets of simplicial complexes and simple polytopes, which was brought to toric topology by Bahri, Bendersky, Cohen, and Gitler~\cite{BBCG15}. 

\begin{constr}\label{simpmultwedge}
Let $J=(j_{1},\ldots,j_{m})$ be an $m$-tuple of positive integers. Consider the following vertex set:
$$
m(J)=\{11,\ldots,1j_{1},\ldots,m1,\ldots,mj_{m}\}.
$$
To define \emph{a simplicial multiwedge, or a $J$-construction} of $K$, which is a simplicial complex $K(J)$ on $m(J)$, we say that the set $\MF(K(J))\subset 2^{[m(J)]}$ consists of the subsets of $m(J)$ of the type 
$$
I(J)=\{i_{1}1,\ldots,i_{1}j_{i_1},\ldots,i_{k}1,\ldots,i_{k}j_{i_k}\},
$$
where $I=\{i_{1},\ldots,i_{k}\}\in\MF(K)$. Note that if $J=(1,\ldots,1)$, then $K(J)=K$. 
\end{constr}

\begin{exam}
Suppose $K=K_P$ is a polytopal sphere on $m$ vertices, that is, a nerve complex of a simple $n$-dimensional polytope with $m$ facets. Due to~\cite[Theorem 2.4]{BBCG15} and~\cite[Proposition 2.2]{Ch-P} its simplicial multiwedge $K(J)$ is always a polytopal sphere and thus a nerve complex of a simple polytope $Q=P(J)$, and so $K_{P(J)}=K_{P}(J)$. If we denote by $d(J)=j_{1}+\ldots+j_{m}$ then it is easy to see that $m(P(J))=d(J)$ and $n(P(J))=d(J)+n-m$. Therefore, $m(P(J))-n(P(J))=m(P)-n(P)$ remains the same after performing a $J$-construction. 
\end{exam}

Note that when $J=(2,\ldots,2)$ one gets the so called \emph{doubling} of the simple polytope $P$; for the geometric description of the $J$-construction on polytopes and its applications in toric topology see also~\cite{G-LdM}.  

In what follows we shall denote by $\ko$ a field of zero characteristic, or the ring of integers. Let $\ko[v_{1},\ldots,v_{m}]$ be a graded polynomial algebra on $m$ variables, $\deg(v_{i})=2$. 

\begin{defi}
A \emph{Stanley--Reisner ring}, or a \emph{face ring} of $K$ (over $\ko$) is the quotient ring
$$
   \ko[K]=\ko[v_{1},\ldots,v_{m}]/\mathcal I_{SR}(K),
$$
where $\mathcal I_{SR}(K)$ is the ideal generated by square free
monomials $v_{i_{1}}\cdots{v_{i_{k}}}$ such that $\{i_{1},\ldots,i_{k}\}\in\MF(K)$. The monomial ideal
$\mathcal I_{SR}(K)$ is called the \emph{Stanley--Reisner ideal} of~$K$.
Then $\ko[K]$ has a natural structure of a $\ko$-algebra and a module over the polynomial algebra $\ko[v_{1},\ldots,{v_{m}}]$ via the quotient projection. 
\end{defi}

Note that $\mathcal{I}_{SR}(K)=(0)$ if and only if $K=\Delta_{[m]}$.
By a result of Bruns and Gubeladze~\cite{br-gu96} two simplicial complexes $K_1$ and $K_2$ are combinatorially equivalent if and only if their Stanley--Reisner algebras are isomorphic. Thus, $\ko[K]$ is a complete combinatorial invariant of a simplicial complex $K$.

\begin{defi}
A simplicial complex $K$ is called \emph{flag} if it coincides with the \emph{clique complex} $\Delta(\Gamma)$ of its 1-dimensional skeleton $\Gamma=\sk^1(K)$, that is, for any subset of vertices $I\subset [m]$ of $K$, which are pairwisely connected by edges in $\Gamma$, its induced subcomplex $K_{I}=\Delta_{I}$. A simple polytope $P$ is called \emph{flag} if its nerve complex $K_P$ is flag.

A simplicial complex $K$ is called $q$-\emph{connected} ($q\geq 1$) if $K_{I}=\Delta_{I}$ for any subset of vertices $I\subset [m]$ of $K$ with $|I|=q$ elements. Thus, any simplicial complex $K$ is 1-connected; $K$ is $q$-connected implies $K$ is $l$-connected for any $1\leq l\leq q$.  
\end{defi}

\begin{rema}
(1) $K$ is flag if and only if for any $I\in\MF(K)$ one has $|I|=2$, i.e. $\mathcal I_{SR}(K)$ is generated by monomials of degree 4. (2) $K$ is $q$-connected if and only if for any $I\in\MF(K)$ one has $|I|\geq q+1$.
\end{rema}

Suppose $({\bf{X}},{\bf{A}})=\{(X_i,A_i)\}_{i=1}^{m}$ is a set of topological pairs. A particular case of the following construction appeared firstly in the work of Buchstaber and Panov~\cite{bu-pa00-2} and then was studied intensively and generalized in a series of more recent works, among which are~\cite{BBCG10,GT,IK}. 

\begin{defi}(\cite{BBCG10})
A {\emph{polyhedral product}} over a simplicial complex $K$ on the vertex set $[m]$ is a topological space
$$
({\bf{X}},{\bf{A}})^K=\bigcup\limits_{I\in K}({\bf{X}},{\bf{A}})^I,
$$
where $({\bf{X}},{\bf{A}})^I=\prod\limits_{i=1}^{m} Y_{i}$ for $Y_{i}=X_{i}$, if $i\in I$, and $Y_{i}=A_{i}$, if $i\notin I$.
\end{defi}

\begin{exam}
Suppose $X_{i}=X$ and $A_{i}=A$ for all $1\leq i\leq m$. Then the following spaces are particular cases of the polyhedral product construction $({\bf{X}},{\bf{A}})^{K}=(X,A)^K$.
\begin{itemize}
\item[(1)] The {\emph{moment-angle-complex}} $\zk=(\mathbb{D}^2,\mathbb{S}^1)^K$;
\item[(2)] The {\emph{real moment-angle-complex}} $\mathcal R_K=(\mathbb{D}^1,\mathbb{S}^0)^K$;
\item[(3)] If $K=K_P$, then the {\emph{moment-angle manifold}} $\zp=\zk$ is a 2-connected smooth closed $(m+n)$-dimensional manifold for any simple $n$-dimensional polytope $P$ with $m$ facets, see~\cite{bu-pa00-2};
\item[(3)] The \emph{Davis--Januszkiewicz space} $\Dj(K)=(\mathbb{C}P^{\infty},*)^K$;
\item[(4)] A complement of the coordinate subspace arrangement in $\C^m$  
$$
\U(K)=\C^m\backslash\cup_{I\in\MF(K)}\{(z_{1},\ldots,z_{m})|\,z_{i_{1}}=\ldots=z_{i_k}=0\}=(\mathbb{C},\mathbb{C}^{*})^K,
$$
determined by a simplicial complex $K$ ($I=\{i_{1},\ldots,i_{k}\}$);
\item[(5)] A cubical subcomplex $\cc(K)=(I^1,1)^K$ in $I^{m}=[0,1]^m$ which is PL homeomorphic to a cone over a barycentric subdivision of $K$.
\end{itemize} 
\end{exam}

\begin{rema}
The following properties of the polyhedral products defined above hold.
\begin{itemize}
\item $\zk$ is homeomorphic to $\mathcal R_{K(2,\ldots,2)}$;  
\item If $K$ is $q$-connected, then $\zk$ is a $2q$-connected CW complex;
\item There is a $\mathbb{T}^m$-equivariant deformation retraction: 
$$
\zk\hookrightarrow\U(K)\rightarrow\zk,
$$
in particular, $\U(K)$ and $\zk$ have the same homotopy type and $H^*(\U(K))\cong H^*(\zk)$;
\item One has a commutative diagram
$$\begin{CD}
  \zk @>>>(\mathbb{D}^2)^m\\
  @VVrV\hspace{-0.2em} @VV\rho V @.\\
  \cc(K) @>i_c>> I^m
\end{CD}\eqno 
$$
where $i_{c}:\,\cc(K)\hookrightarrow I^{m}=(I^1,I^1)^{[m]}$ is an embedding of a cubical subcomplex, induced by the inclusion of pairs: $(I^1,1)\subset (I^1,I^1)$, and the maps $r$ and $\rho$ are projections onto the orbit spaces of $\mathbb{T}^m$-action induced by the coordinatewise action of $\mathbb{T}^m$ on the unitary complex polydisk $(\mathbb{D}^2)^m$ in $\C^m$;
\item One has a homotopy fibration
$$\begin{CD}
  X @>>> ET^m\\
  @VVV\hspace{-0.2em} @V\pi VV @.\\
  \Dj(K) @>p>> BT^{m}
\end{CD}\eqno 
$$
where $\pi$ is the universal $\mathbb{T}^m$-bundle and the map $p:\,\Dj(K)\rightarrow BT^{m}$ is induced by the inclusion of pairs $(\mathbb{C}P^{\infty},*)\subset (\mathbb{C}P^{\infty},\mathbb{C}P^{\infty})$. 
Moreover, its homotopy fiber $X\simeq\zk$ and $\Dj(K)$ is homotopy equivalent to the Borel construction $ET^m\times_{\mathbb{T}^m}\zk$. Therefore, for the equivariant cohomology of $\zk$ we have:
$$
H^{*}_{\mathbb{T}^m}(\zk)=H^{*}(\Dj(K))\cong\mathbb{Z}[K].
$$
The details can be found in~\cite[Chapter 4, Chapter 8]{TT}.
\end{itemize}
\end{rema}

Buchstaber and Panov~\cite{bu-pa00-2} analysed the homotopy fibration above and proved that the corresponding Eilenberg-Moore spectral sequence over rationals, which converges to $H^{*}(\zk,\mathbb{Q})$, degenerates in the $E_2$ term. Furthermore, together with Baskakov they proved that the following result on the cohomology algebra of $\zk$ holds with coefficients in any ring with unit $\ko$. 

\begin{theo}[{\cite[Theorem 4.5.4]{TT}}]\label{zkcoh}
Cohomology algebra of a moment-angle-complex $\zk$ is given
by the isomorphisms
\[
\begin{aligned}
  H^{*,*}(\mathcal Z_K;\ko)&\cong\Tor_{\ko[v_1,\ldots,v_m]}^{*,*}(\ko[K],\ko)\\
  &\cong H^*\bigl[R(K),d\bigr]\\
  &\cong \bigoplus\limits_{I\subset [m]}\widetilde{H}^{*}(K_{I};\ko),
\end{aligned}
\]
where bigrading and differential in the cohomology of the differential
(bi)graded algebra are defined by
\[
  \mathop{\mathrm{bideg}} u_i=(-1,2),\;\mathop{\mathrm{bideg}} v_i=(0,2);\quad
  du_i=v_i,\;dv_i=0
\]
and $R^*(K)=\Lambda[u_{1},\ldots,u_{m}]\otimes\ko[K]/(v_{i}^{2}=u_{i}v_{i}=0,1\leq i\leq m)$.
Here, $\widetilde{H}^*(K_{I})$ (we drop $\ko$ from the notation in what follows) denotes the reduced simplicial cohomology of $K_{I}$. The last isomorphism above is the sum of isomorphisms 
$$
H^{p}(\zk)\cong\sum\limits_{I\subset [m]}\widetilde{H}^{p-|I|-1}(K_{I}).
$$
To determine the product of two cohomology classes $\alpha=[a]\in\tilde{H}^{p}(K_{I_1})$ and $\beta=[b]\in\tilde{H}^{q}(K_{I_2})$ define the natural inclusion of sets $i:\,K_{I_{1}\sqcup I_{2}}\rightarrow K_{I_1}*K_{I_2}$ and the canonical isomorphism of cochain modules:
$$
j:\,\tilde{C}^{p}(K_{I_1})\otimes\tilde{C}^{q}(K_{I_2})\rightarrow\tilde{C}^{p+q+1}(K_{I_{1}}*K_{I_2}).
$$
Then, the product of $\alpha$ and $\beta$ is given by:
$$
\alpha\cdot\beta=\begin{cases}
0,&\text{if $I_{1}\cap I_{2}\neq\varnothing$;}\\
i^{*}[j(a\otimes b)]\in\tilde{H}^{p+q+1}(K_{I_{1}\sqcup I_{2}}),&\text{if $I_{1}\cap I_{2}=\varnothing$.}
\end{cases}
$$
\end{theo}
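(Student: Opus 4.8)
The plan is to prove the three displayed isomorphisms in succession, working over an arbitrary coefficient ring $\ko$, and to extract the product formula from an explicit cochain model. I would begin from the homotopy fibration $\zk \to \Dj(K) \xrightarrow{p} BT^{m}$ recalled above, together with the identification $H^{*}(\Dj(K)) \cong \ko[K]$ of graded algebras, under which $H^{*}(BT^{m}) = \ko[v_1,\ldots,v_m]$ acts on $\ko$ through the augmentation and on $\ko[K]$ through $p^{*}$. The Eilenberg--Moore spectral sequence of this fibration then has
\[
E_2 \cong \Tor^{*,*}_{H^{*}(BT^{m})}\bigl(H^{*}(\Dj(K)),\,\ko\bigr) \cong \Tor^{*,*}_{\ko[v_1,\ldots,v_m]}\bigl(\ko[K],\ko\bigr)
\]
and converges to $H^{*}(\zk;\ko)$; the internal grading ($\deg v_i = 2$) and the resolution grading together produce the asserted bigrading.

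To make the $\Tor$-term computable I would resolve $\ko$ over $\ko[v_1,\ldots,v_m]$ by the Koszul complex $\Lambda[u_1,\ldots,u_m] \otimes \ko[v_1,\ldots,v_m]$ with $du_i = v_i$, which is a free multigraded resolution. Tensoring with $\ko[K]$ yields the differential bigraded algebra $\bigl(\Lambda[u_1,\ldots,u_m] \otimes \ko[K],\,d\bigr)$, whose cohomology is $\Tor_{\ko[v]}(\ko[K],\ko)$. Passing to $R^{*}(K)$ then reduces to showing that the quotient projection onto $R^{*}(K)$ is a quasi-isomorphism of differential graded algebras; this holds because the dg-ideal generated by all $v_i^{2}$ and $u_iv_i$ is acyclic---the relations $d(u_iv_i^{k}) = v_i^{k+1}$ for $k\ge 1$ organize it into contractible two-term pieces, so a homotopy operator built from the $u_i$ contracts it away.

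For the additive decomposition I would exploit the fine $\Z^{m}$-multigrading on $R^{*}(K)$: because of the relations $v_i^{2} = u_iv_i = 0$ every nonzero monomial is square-free and supported on some subset $I \subset [m]$, so $R^{*}(K)$ splits as a direct sum of subcomplexes indexed by $I$. For fixed $I$ I would identify this summand, up to a degree shift by $|I|+1$, with the reduced simplicial cochain complex of the full subcomplex $K_I$ (a Hochster-type identification), which gives the last isomorphism $H^{p}(\zk) \cong \bigoplus_{I\subset[m]} \widetilde{H}^{\,p-|I|-1}(K_I)$. The product formula is then read off inside $R^{*}(K)$: the multigrading forces the product of $\alpha \in \widetilde{H}^{p}(K_{I_1})$ and $\beta \in \widetilde{H}^{q}(K_{I_2})$ into multidegree supported on $I_1 \cup I_2$, which is square-free only when $I_1 \cap I_2 = \varnothing$, whence the vanishing in the remaining case; when the supports are disjoint, multiplication in $R^{*}(K)$ corresponds under the Hochster identification to the canonical cochain isomorphism $j$ followed by the restriction $i^{*}$ along $K_{I_1\sqcup I_2}\hookrightarrow K_{I_1}*K_{I_2}$, which is precisely the stated expression.

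The main obstacle is multiplicativity over an arbitrary ring $\ko$, rather than the additive bookkeeping: the Eilenberg--Moore spectral sequence a priori endows only the associated graded of $H^{*}(\zk)$ with the product coming from $\Tor$, so one must upgrade the collapse to an isomorphism of algebras and, separately, verify that the quasi-isomorphism $(\Lambda[u]\otimes\ko[K],d)\to R^{*}(K)$ is multiplicative. The cleanest way to secure both at once---avoiding any hypothesis on the characteristic---is to realize $R^{*}(K)$ directly as the cellular cochain algebra of the natural CW structure on $\zk$, whose cells are indexed by pairs $(\sigma,\omega)$ with $\sigma\in K$, and to compute the cup product on cochains. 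The join-and-restriction description of the product then drops out of a direct inspection of how the cells supported on $I_1$ and on $I_2$ multiply, which also re-proves the collapse of the spectral sequence over $\ko$.
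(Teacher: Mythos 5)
This theorem is quoted from \cite[Theorem~4.5.4]{TT} and the paper supplies no proof of its own, so there is nothing internal to compare against; your sketch reconstructs essentially the standard argument of the cited source (Eilenberg--Moore spectral sequence for $\zk\to\Dj(K)\to BT^m$, Koszul resolution, quasi-isomorphism onto $R^*(K)$, multigraded Hochster-type splitting, and the product read off from square-free multidegrees). In particular you correctly isolate the genuinely delicate point --- that over an arbitrary $\ko$ the multiplicative isomorphism cannot be extracted from the spectral sequence alone and must instead come from identifying $R^*(K)$ with the cellular cochain algebra of $\zk$ (via a cellular diagonal approximation) --- which is exactly how the cited reference closes the argument, so the proposal is correct.
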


The additive structure of the Tor-algebra that appears in the theorem above can be computed by using either the Koszul minimal free resolution for $\ko$ viewed as a $\ko[m]=\ko[v_1,\ldots,v_m]$-module, or the Taylor free resolution for $\ko[K]$ viewed as a $\ko[m]$-module. In general, the latter one is not minimal, however, it is sometimes more useful in combinatorial proofs. In fact, the $\ko$-module structure of $\Tor_{\ko[v_1,\ldots,v_m]}^{*,*}(\ko[K],\ko)$ is determined by all the reduced cohomology groups of all the induced subcomplexes in $K$ (including $\varnothing$ and $K$ itself). More precisely, the following result of Hochster holds.

\begin{theo}[{\cite{Hoch}}]\label{hoch}
For any simplicial complex $K$ on $m$ vertices:
$$
\Tor^{-i,2j}_{\ko[v_{1},\ldots,v_{m}]}(\ko[K],\ko)\cong\bigoplus\limits_{J\subset [m],\,|J|=j}\widetilde{H}^{j-i-1}(K_{J};\ko).
$$
\end{theo}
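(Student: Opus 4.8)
The plan is to compute the left-hand side directly from the multigraded Koszul resolution and to match its graded pieces with the reduced simplicial cochain complexes of the induced subcomplexes $K_J$. Write $R=\ko[v_1,\ldots,v_m]$ with its fine $\mathbb Z^m$-grading, $\deg v_i=\mathbf e_i$. The exterior algebra $\Lambda[u_1,\ldots,u_m]\otimes R$ with $du_i=v_i$ is the (minimal, multigraded) Koszul free resolution of $\ko$ over $R$, so tensoring with $\ko[K]$ gives
$$
\Tor_R^{*,*}(\ko[K],\ko)\cong H\bigl(\ko[K]\otimes_{\ko}\Lambda[u_1,\ldots,u_m],\partial\bigr),\qquad \partial(\mu\otimes u_S)=\sum_{k\in S}\pm\,(v_k\mu)\otimes u_{S\setminus k},
$$
where $u_S=u_{k_1}\wedge\cdots\wedge u_{k_s}$ and a summand is dropped whenever $v_k\mu=0$ in $\ko[K]$. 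A monomial $v^{\mathbf b}u_S$ with $\operatorname{supp}\mathbf b\in K$ has homological degree $-|S|$ and $\mathbb Z^m$-degree $\mathbf b+\mathbf e_S$, hence internal degree $2(|\mathbf b|+|S|)=2|\mathbf a|$ where $\mathbf a=\mathbf b+\mathbf e_S$. Thus $\partial$ preserves the total multidegree $\mathbf a\in\mathbb Z_{\geq0}^m$, the complex splits as $\bigoplus_{\mathbf a}C_\bullet(\mathbf a)$, and
$$
\Tor_R^{-i,2j}(\ko[K],\ko)\cong\bigoplus_{|\mathbf a|=j}H^{-i}\bigl(C_\bullet(\mathbf a)\bigr),\qquad |\mathbf a|=a_1+\cdots+a_m .
$$

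First I would prove the \emph{squarefree reduction}: $H(C_\bullet(\mathbf a))=0$ as soon as some $a_k\geq2$. In such a multidegree every basis element $v^{\mathbf b}u_S$ has $b_k=a_k-[k\in S]\geq1$, so $k\in\operatorname{supp}\mathbf b$ regardless of $S$; I would then define a contracting homotopy $h(v^{\mathbf b}u_S)=\pm\,v^{\mathbf b-\mathbf e_k}u_{S\cup k}$ for $k\notin S$ and $h(v^{\mathbf b}u_S)=0$ for $k\in S$, and check $h\partial+\partial h=\mathrm{id}$ on $C_\bullet(\mathbf a)$. Conceptually this says that the $k$-th Koszul direction splits off an acyclic two-term factor once $v_k$ acts nontrivially on the whole multidegree, and the hypothesis $a_k\geq2$ is exactly what guarantees that $b_k-1\geq1$, i.e.\ $\operatorname{supp}(\mathbf b-\mathbf e_k)$ still contains $k$ and so stays a face of $K$, making $h$ well defined and multidegree-preserving. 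Getting this homotopy and its signs right is the only real obstacle; everything else is bookkeeping.

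It then remains to treat a squarefree multidegree $\mathbf a=\mathbf e_J$ with $|J|=j$. Here each basis element is forced to be $v_{J\setminus S}\,u_S$ for some $S\subseteq J$, and it is nonzero precisely when the face $\tau=J\setminus S$ lies in $K$, i.e.\ $\tau\in K_J$; such a $\tau$ has $\dim\tau=|\tau|-1=(j-|S|)-1$. Since $\partial(v_{J\setminus S}u_S)=\sum_{k\in S}\pm v_{(J\setminus S)\cup k}\,u_{S\setminus k}$ adds the vertex $k$ to $\tau$ and keeps the term iff $\tau\cup\{k\}\in K_J$, the correspondence $\tau=J\setminus S\leftrightarrow u_S$ turns $C_\bullet(\mathbf e_J)$, up to sign, into the augmented (reduced) simplicial cochain complex of $K_J$, with $u_S$ in homological degree $-|S|=-i$ matching cochain degree $\dim\tau=j-i-1$ and the empty face supplying the reduction. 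After fixing an ordering of $J$ to align the Koszul signs with the simplicial coboundary, this gives $H^{-i}(C_\bullet(\mathbf e_J))\cong\widetilde H^{\,j-i-1}(K_J;\ko)$, and summing over all squarefree multidegrees with $|J|=j$ yields the claimed isomorphism. Alternatively, one may read the formula off Theorem~\ref{zkcoh}: the summand $\widetilde H^{\,p-|I|-1}(K_I)$ of $H^p(\zk)$ carries internal degree $2|I|$ and homological degree $-i$ with $j-i-1=p-|I|-1$, so it sits in $\Tor_R^{-i,2j}$ with $j=|I|$ and $i=2|I|-p$, which is precisely Hochster's formula.
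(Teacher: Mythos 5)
The paper does not prove this statement: it is quoted as Hochster's theorem with a citation to \cite{Hoch} (and its multigraded refinement, Theorem~\ref{mgrad}, is likewise quoted from \cite{TT}), so there is no in-paper proof to compare against. Your argument is the standard proof of Hochster's formula, essentially the one in \cite[\S 3.2]{TT}: compute $\Tor$ from the multigraded Koszul complex $\ko[K]\otimes\Lambda[u_1,\ldots,u_m]$, split it by multidegree, kill the non-squarefree multidegrees by the contracting homotopy coming from the acyclic two-term factor in a direction $k$ with $a_k\geq 2$, and identify the squarefree component $C_\bullet(\mathbf e_J)$ with the augmented simplicial cochain complex of $K_J$ via $v_{J\setminus S}u_S\leftrightarrow (J\setminus S)^*$; the degree bookkeeping ($i=|S|$, $\dim(J\setminus S)=j-i-1$) is right, and the only points you defer --- the sign check for $h\partial+\partial h=\mathrm{id}$ and the ordering of $J$ aligning Koszul signs with the simplicial coboundary --- are genuinely routine. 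This is correct and complete as a proof. One caveat: drop the closing alternative of ``reading the formula off Theorem~\ref{zkcoh},'' since within this paper Theorem~\ref{zkcoh} is itself obtained from the same Koszul-complex computation, so that route is circular as a proof of Hochster's formula rather than an independent derivation.
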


The ranks of the bigraded components of the Tor-algebra 
$$
\beta^{-i,2j}(\ko[K])=\rk_{\ko}\Tor^{-i,2j}_{\ko[v_{1},\ldots,v_{m}]}(\ko[K],\ko)
$$ 
are called the \emph{bigraded (algebraic) Betti numbers} of $\ko[K]$, or $K$ when $\ko$ is fixed. Note that by Theorem~\ref{zkcoh} they determine the (topological) Betti numbers of $\zk$.

Consider the polytopal case $K=K_P$. Let us denote by $P_J=\cup_{j\in J}\,F_{j}\subset\partial P$ the union of the corresponding facets of $P$ and by $\cc(P_J)$ -- the number of connected components of $P_J$. It can be seen that the following equality arises due to Theorem~\ref{hoch} when $j=i+1$: 
$$
\beta^{-i,2(i+1)}(P)=\sum\limits_{J\subset [m],|J|=i+1}(\cc(P_J)-1).
$$
 
In Section 3 we shall use a refinement of Theorem~\ref{zkcoh}. Namely, the differential graded algebra $R(K)$ acquires multigrading and the next statement holds.

\begin{theo}[{\cite[Construction 3.2.8, Theorem 3.2.9]{TT}}]\label{mgrad}
For any simplicial complex $K$ on $m$ vertices we have:
$$
\Tor^{-i,2J}_{\ko[v_{1},\ldots,v_{m}]}(\ko[K],\ko)\cong\widetilde{H}^{|J|-i-1}(K_{J};\ko),
$$
where $J\subset [m]$ and $\Tor^{-i,2{\bf{a}}}_{\ko[v_{1},\ldots,v_{m}]}(\ko[K],\ko)=0$, if ${\bf{a}}$ is not a $(0,1)$-vector. Moreover, 
$$
\Tor^{-i,2{\bf{a}}}_{\ko[v_{1},\ldots,v_{m}]}(\ko[K],\ko)\cong H^{-i,2{\bf{a}}}[R(K),d].
$$
\end{theo}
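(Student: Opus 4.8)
The plan is to upgrade the bigrading in Theorem~\ref{zkcoh} to the full $\Z^m$-multigrading and then compute the multidegree-$2J$ pieces by identifying them with reduced simplicial cochains. First I would equip $\ko[v_1,\ldots,v_m]$ with the $\Z^m$-multigrading in which $v_i$ has multidegree $2\mathbf{e}_i$. Since $\mathcal I_{SR}(K)$ is a monomial ideal it is multihomogeneous, so $\ko[K]$ is a multigraded module; the Koszul resolution $\Lambda[u_1,\ldots,u_m]\otimes\ko[v_1,\ldots,v_m]$ of $\ko$ is multigraded once $u_i$ is assigned multidegree $2\mathbf{e}_i$, and its differential is multihomogeneous. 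Hence $\Tor_{\ko[v_1,\ldots,v_m]}(\ko[K],\ko)$ inherits a $\Z^m$-multigrading and splits as a direct sum over multidegrees $2\mathbf{a}$. Moreover, the quasi-isomorphisms underlying Theorem~\ref{zkcoh} (the Koszul resolution, the model $\Lambda[u]\otimes\ko[K]$ with $du_i=v_i$, and its quotient $R(K)$) are all multihomogeneous, so the isomorphism $\Tor\cong H[R(K),d]$ refines to each multidegree, which is exactly the ``moreover'' assertion $\Tor^{-i,2\mathbf{a}}_{\ko[v_1,\ldots,v_m]}(\ko[K],\ko)\cong H^{-i,2\mathbf{a}}[R(K),d]$.

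Next I would read off the vanishing outside $(0,1)$-vectors directly from the structure of $R(K)=\Lambda[u_1,\ldots,u_m]\otimes\ko[K]/(v_i^2=u_iv_i=0)$. In any monomial each index $i$ can contribute at most once: an exterior generator $u_i$ (as $u_i^2=0$ automatically), or a single $v_i$ (as $v_i^2=0$), but never both (as $u_iv_i=0$). Thus every monomial of $R(K)$ is squarefree and lies in multidegree $2\mathbf{a}$ for some $(0,1)$-vector $\mathbf{a}$, and since $du_i=v_i$ preserves this, $H^{-i,2\mathbf{a}}[R(K),d]=0$ whenever $\mathbf{a}$ is not a $(0,1)$-vector.

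It then remains to identify, for each subset $J\subset[m]$ (i.e.\ each $(0,1)$-vector), the multidegree-$2J$ summand of $(R(K),d)$. A monomial of multidegree $2J$ has the form $u_S v_\sigma:=\bigl(\prod_{i\in S}u_i\bigr)\bigl(\prod_{j\in\sigma}v_j\bigr)$ with $S\sqcup\sigma=J$, and it is nonzero in $\ko[K]$ precisely when $\sigma\in K$, i.e.\ $\sigma$ is a simplex of $K_J$; its homological degree is $-|S|=-(|J|-|\sigma|)$. I would therefore send $u_S v_\sigma$ to (a sign times) the cochain dual to $\sigma$, obtaining a $\ko$-linear isomorphism between the multidegree-$2J$ part of $R(K)$ in homological degree $-i$ and the reduced simplicial cochains $\widetilde{C}^{|J|-i-1}(K_J)$: a simplex $\sigma$ with $|\sigma|=|J|-i$ has dimension $|J|-i-1$, while the empty face $\sigma=\varnothing$ (the monomial $u_J$ in homological degree $-|J|$) accounts for the augmentation term $\widetilde{C}^{-1}(K_J)$. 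Under this correspondence the differential $d$, which replaces one $u_i$ by $v_i$ and kills the result exactly when $\sigma\cup\{i\}\notin K$, matches the simplicial coboundary of $K_J$.

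The hard part will be precisely this last identification: I must fix sign conventions so that $d$ on $R(K)$ agrees with the reduced simplicial coboundary, keep the degree bookkeeping consistent so that homological degree $-i$ lands in cohomological dimension $|J|-i-1$, and correctly incorporate the empty simplex so as to obtain reduced rather than unreduced cohomology. Once the isomorphism of cochain complexes is in place, passing to cohomology yields $H^{-i,2J}[R(K),d]\cong\widetilde{H}^{|J|-i-1}(K_J;\ko)$, and combining this with the multigraded refinement of the first paragraph gives $\Tor^{-i,2J}_{\ko[v_1,\ldots,v_m]}(\ko[K],\ko)\cong\widetilde{H}^{|J|-i-1}(K_J;\ko)$, completing the proof.
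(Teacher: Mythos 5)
Your outline is correct and follows essentially the same route as the source the paper cites for this statement (\cite[Construction 3.2.8, Theorem 3.2.9]{TT}): refine the Koszul-resolution computation of $\Tor$ to the $\Z^m$-multigrading, observe that $R(K)$ is squarefree so only $(0,1)$-multidegrees survive, and identify the multidegree-$2J$ component of $(R(K),d)$ with the (augmented, shifted) simplicial cochain complex of $K_J$. The paper itself offers no independent proof, and the sign and degree bookkeeping you flag is exactly the routine verification carried out in that reference.
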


Now consider the real case. By~\cite[Theorem 8.9]{BP02} one has additive isomorphisms:
$$
H^{p}(\mathcal R_K;\ko)\cong\sum\limits_{J\subset [m]} H^{p-1}(K_J;\ko),
$$ 

The multiplicative structure was given firstly by Cai~\cite{C} and for generalized moment-angle-complexes by Bahri, Bendersky, Cohen, and Gitler~\cite{BBCG15}, which is equivalent to the result of~\cite{C} in the case of a real moment-angle-complex. Consider the differential graded algebra $r^*(K)$ which is a quotient algebra of a free graded algebra on $2m$ variables $u_{i},t_{j}$, where $\deg(u_{i})=1,\deg(t_{j})=0$, over the Stanley-Reisner ideal of $K$ in variables $\{u_i\}$ and the following commutation relations:
$$
u_{i}t_{i}=u_{i}, t_{i}u_{i}=0, u_{i}t_{j}=t_{j}u_{i}, t_{i}t_{i}=t_{i}, t_{i}t_{j}=t_{j}t_{i}, u_{i}u_{i}=0, u_{i}u_{j}=-u_{j}u_{i}.
$$ 

\begin{theo}[{\cite{C}}]
The following graded ring isomorphism holds:
$$
H^{*}(\mathcal R_K)\cong H^{*}[r(K),d],
$$ 
where $d(t_i)=u_i$ and $d(u_i)=0$.
\end{theo}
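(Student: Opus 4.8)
The plan is to realize $\mathcal R_K=(\mathbb D^1,\mathbb S^0)^K$ as a regular cubical subcomplex of the cube $(\mathbb D^1)^m=[-1,1]^m$ and to identify its cellular cochain differential graded algebra with $r^*(K)$; the ring isomorphism then follows by passing to cohomology. Equip each interval $\mathbb D^1=\Delta^1$ with the CW structure having two $0$-cells $v_+,v_-$ (its boundary $\mathbb S^0$) and one $1$-cell $e$. A face of $(\mathbb D^1)^m$ is then prescribed by a function $[m]\to\{+,-,*\}$, the symbol $*$ marking the coordinates that stay free; its dimension is the number of free coordinates, and by the definition of the polyhedral product such a face lies in $\mathcal R_K$ precisely when its set of free coordinates is a simplex of $K$. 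Thus $\mathcal R_K$ is a regular CW complex whose cells are indexed by pairs $(I,\varepsilon)$ with $I\in K$ and $\varepsilon$ a sign function on $[m]\setminus I$.

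First I would treat a single coordinate. On $C^*(\Delta^1)$ set $u_i=e^*$, $t_i=v_+^*$ and note $1=v_+^*+v_-^*$, so that the degree-$0$ part is $\ko\langle 1,t_i\rangle$ and $u_i$ spans degree $1$. Orienting $e$ so that $\partial e=v_+-v_-$ gives $\delta v_+^*=e^*$, i.e. $d t_i=u_i$ and $d u_i=0$. Using the Alexander--Whitney cup product on $\Delta^1$ one computes $t_i t_i=v_+^*\cup v_+^*=t_i$, $u_i u_i=0$ (there is no $2$-cell), and the two noncommutative relations $u_i t_i=e^*\cup v_+^*=u_i$ and $t_i u_i=v_+^*\cup e^*=0$. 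Hence the little algebra $a_i=\ko\langle 1,t_i,u_i\rangle$ is exactly the cochain DGA of $\Delta^1$, with cohomology $\ko$ concentrated in degree $0$, as it must be.

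Next I would assemble the coordinates. Since $(\mathbb D^1)^m$ is the product CW complex, its cochain algebra, taken with the cubical (tensor of Alexander--Whitney) diagonal, is the graded tensor product $\bigotimes_{i=1}^m a_i$; the Koszul sign rule yields $u_i u_j=-u_j u_i$, $u_i t_j=t_j u_i$ and $t_i t_j=t_j t_i$ for $i\neq j$, so that $C^*((\mathbb D^1)^m)$ is precisely $r^*(\Delta_{[m]})$. Restricting cochains to the subcomplex $\mathcal R_K$ gives a surjection of DGAs $C^*((\mathbb D^1)^m)\to C^*(\mathcal R_K)$ whose kernel is spanned by the cochains supported on faces not belonging to $\mathcal R_K$, that is, by the monomials $u_I\cdot(\text{product of }t_j)$ with $I\notin K$. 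Such a monomial vanishes on $\mathcal R_K$ exactly when $I$ contains a minimal non-face, i.e. when $u_I$ lies in the ideal generated by $\{u_{i_1}\cdots u_{i_k}:\{i_1,\ldots,i_k\}\in\MF(K)\}$. Therefore this kernel is precisely the Stanley--Reisner ideal in the variables $u_i$, and the quotient DGA is $r^*(K)$. Passing to cohomology yields the asserted graded ring isomorphism $H^*(\mathcal R_K)\cong H^*[r(K),d]$.

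I expect the genuine work to lie in the cochain-level multiplication rather than in the combinatorics of cells. One must fix a cellular diagonal approximation on the cube (the cubical Serre diagonal, equivalently the tensor product of the simplicial Alexander--Whitney maps on the factors $\Delta^1$) and check that it is associative, induces the cup product on $H^*$, produces exactly the signs and the noncommutative relations $u_i t_i=u_i$, $t_i u_i=0$ listed above, and---crucially---restricts to the subcomplex $\mathcal R_K$, so that the induced product on $C^*(\mathcal R_K)$ is well defined. Once this bookkeeping is in place, the identification $C^*(\mathcal R_K)\cong r^*(K)$ of differential graded algebras is immediate and the theorem follows.
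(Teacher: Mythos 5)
Your proposal is correct, and it reconstructs the standard argument: the paper quotes this theorem from Cai's work~\cite{C} without proof, and the identification of $r^*(K)$ with the cellular cochain algebra of the cubical subcomplex $\mathcal R_K\subset(\mathbb D^1)^m$ (via the cell-local Serre diagonal, i.e.\ the tensor product of Alexander--Whitney diagonals on the factors $\Delta^1$) is exactly how it is established there, in parallel with the proof of Theorem~\ref{zkcoh} for $R(K)$. The only point needing care is the one you already flag --- the diagonal approximation must be strictly coassociative and restrict to subcomplexes so that $C^*((\mathbb D^1)^m)\to C^*(\mathcal R_K)$ is a map of DGAs with kernel the Stanley--Reisner ideal in the $u_i$ --- and with that in place your computation of the relations $u_it_i=u_i$, $t_iu_i=0$, $t_it_i=t_i$ and the Koszul signs completes the proof.
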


Moreover, if $K(J)$ is a simplicial multiwedge over $K$, then one has the next result on cohomology algebra of a generalized moment-angle-complex $\mathcal R_{K}^{J}=(\b{D}^{j_{i}},\b{S}^{j_{i}-1})^K$ ($1\leq i\leq m$), see~\cite[Theorem 5.1]{C} and~\cite[Proposition 6.2]{BBCG15} (for even $j_i$):
\begin{theo}
The following isomorphism of graded algebras holds:
$$
H^{*}(\mathcal R_{K}^{J})\cong H^{*}(\mathcal R_{K(J)}).
$$
\end{theo}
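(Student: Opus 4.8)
The plan is to deduce the stated algebra isomorphism from a homeomorphism $\mathcal R_{K}^{J}\cong\mathcal R_{K(J)}$, since any homeomorphism induces an isomorphism of cohomology algebras. This homeomorphism is of the type recorded in Theorem~\ref{homeoBBCG}, so I shall reconstruct it explicitly in the real case. The guiding idea is that the disk $\mathbb{D}^{j}$ is a cube whose boundary is the sphere $\mathbb{S}^{j-1}$, and that this cubical structure realises the pair $(\mathbb{D}^{j},\mathbb{S}^{j-1})$ as a small polyhedral product over a simplex and its boundary. Substituting these building blocks into $\mathcal R_{K}^{J}=(\mathbb{D}^{j_{i}},\mathbb{S}^{j_{i}-1})^{K}$ and using the associativity of the polyhedral product functor produces a real moment-angle-complex whose simplicial complex is precisely $K(J)$.

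First I would record the building-block homeomorphism of pairs
$$
(\mathbb{D}^{j},\mathbb{S}^{j-1})\;\cong\;\bigl((\mathbb{D}^{1},\mathbb{S}^{0})^{\Delta^{j-1}},\,(\mathbb{D}^{1},\mathbb{S}^{0})^{\partial\Delta^{j-1}}\bigr),
$$
where $\Delta^{j-1}$ is the full simplex on $j$ vertices and $\partial\Delta^{j-1}$ its boundary. Indeed $(\mathbb{D}^{1},\mathbb{S}^{0})^{\Delta^{j-1}}=(\mathbb{D}^{1})^{j}$ is the standard $j$-cube, homeomorphic to $\mathbb{D}^{j}$, and under this identification the boundary of the cube --- the locus where at least one coordinate lies in $\mathbb{S}^{0}=\partial\mathbb{D}^{1}$ --- is exactly $(\mathbb{D}^{1},\mathbb{S}^{0})^{\partial\Delta^{j-1}}\cong\mathbb{S}^{j-1}$. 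This identification is natural in $j$ and compatible with the inclusion of the pair, which is what allows it to be inserted coordinatewise into the polyhedral product; note that it works for every $j$, so no parity restriction on the $j_{i}$ is needed in the real case.

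Next I would carry out the substitution and match the two union decompositions combinatorially. Expanding $\mathcal R_{K}^{J}$ over the simplices $\sigma\in K$ and then expanding each factor $\mathbb{S}^{j_{i}-1}$ (for $i\notin\sigma$) as a union over the proper subsets $\tau_{i}\subsetneq\{1,\ldots,j_{i}\}$ indexing $\partial\Delta^{j_{i}-1}$, a point of $\mathcal R_{K}^{J}$ lands in the product of factors $Y_{ik}$ indexed by the subset
$$
S=\bigcup_{i\in\sigma}\{i1,\ldots,ij_{i}\}\,\cup\,\bigcup_{i\notin\sigma}\{ik:\,k\in\tau_{i}\}\subset m(J).
$$
The crucial observation is that the subsets $S$ arising this way are exactly the simplices of $K(J)$: for every $I\in\MF(K)$ one has $I(J)\subseteq S$ only if $I\subseteq\sigma\in K$, which is impossible since $\sigma\in K$ forces $I\in K$; hence $S$ contains no minimal non-face of $K(J)$. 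Conversely, given $S\in K(J)$ one recovers $\sigma=\{i:\{i1,\ldots,ij_{i}\}\subseteq S\}\in K$ together with the proper subsets $\tau_{i}$, so the correspondence is a bijection. Thus the coordinatewise building-block homeomorphisms assemble to a homeomorphism $\mathcal R_{K}^{J}\cong(\mathbb{D}^{1},\mathbb{S}^{0})^{K(J)}=\mathcal R_{K(J)}$, and applying $H^{*}(-;\ko)$ gives the asserted isomorphism of graded algebras.

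The main obstacle is purely a gluing issue: one must check that the family of coordinatewise homeomorphisms of the building-block pairs is mutually compatible on overlaps --- equivalently, that the substitution respects the way the summands $\prod_{i}Y_{i}$ are glued --- so that they patch to a single homeomorphism rather than merely a bijection of index sets. This is precisely where the naturality recorded in the second step is used, and it is the same mechanism underlying the associativity of polyhedral products in~\cite{BBCG15}. Once the homeomorphism is in place the algebra structure transfers automatically by functoriality of cohomology; I note that this is the route of~\cite{BBCG15}, whereas~\cite{C} reaches the same conclusion algebraically, comparing the differential graded algebra $r(K(J))$ of the preceding theorem with a corresponding model for $\mathcal R_{K}^{J}$, thereby trading the point-set gluing for a Koszul-type computation.
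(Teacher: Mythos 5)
Your argument is correct and follows the same route the paper relies on: it amounts to a proof of the homeomorphism $(\mathbb{D}^{j_i},\mathbb{S}^{j_i-1})^K\cong\mathcal R_{K(J)}$ recorded as Theorem~\ref{homeoBBCG} (the paper itself offers no proof of the statement, only citations to~\cite{C} and~\cite{BBCG15}), after which the graded algebra isomorphism is immediate from functoriality of singular cohomology. Your combinatorial matching of the pieces indexed by $(\sigma,\{\tau_i\})$ with the simplices of $K(J)$, together with the observation that a fixed coordinatewise homeomorphism of the ambient products $\prod_i(\mathbb{D}^1)^{j_i}\to\prod_i\mathbb{D}^{j_i}$ restricts correctly to the two subspaces, soundly fills in the details of that cited result.
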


Note, that the algebras $R(K)$ and $r(K)$ are finitely generated $\ko$-modules, opposite to the case of the Koszul algebra $\Lambda[u_{1},\ldots,u_{m}]\otimes\ko[K]$, which has countably many additive generators. By the explicit constructions of multiplication in $H^*(\zk)$ and $H^*(\mathcal R_K)$, one gets that $R(K)$ and $r(K)$ are quasi-isomorphic to the cellular cochain algebras of $\zk$ and $\mathcal R_K$, respectively.

It can be seen that in the case $J=(2,\ldots,2)$ the algebra $r(K(J))$ is isomorphic to the algebra $R(K)$, introduced in Theorem~\ref{zkcoh}. Moreover, the real moment-angle-complex $\mathcal R_{K(2,\ldots,2)}$ is homeomorphic to the moment-angle-complex $\zk$ of the initial simplicial complex $K$ and, more generally, if $(\mathbb{D},\mathbb{S})=(\b{D}^{j_{i}},\b{S}^{j_{i}-1})$ ($1\leq i\leq m$), then by~\cite{BBCG15}:

\begin{theo}\label{homeoBBCG}
The generalized moment-angle-complex $(\mathbb{D}, \mathbb{S})^K$ is homeomorphic to the real moment-angle-complex $\mathcal R_{K(J)}$.
\end{theo}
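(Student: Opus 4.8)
The plan is to exhibit the homeomorphism as an equality of two subsets of one common ambient cube, so that the entire statement collapses to a single combinatorial identity about the minimal non-faces of $K(J)$. Everything else is then routine bookkeeping on an interior/boundary dichotomy performed block by block.

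First I would replace each disk--sphere pair by a cube pair. The $j_i$-disk $\b D^{j_i}$ is homeomorphic to the cube $I^{j_i}=[0,1]^{j_i}$, and since the boundary of a topological manifold with boundary is a homeomorphism invariant, any such homeomorphism carries $\b S^{j_i-1}$ onto $\partial I^{j_i}$. Hence $(\b D^{j_i},\b S^{j_i-1})\cong(I^{j_i},\partial I^{j_i})$ as pairs, and these assemble into a homeomorphism of ambient products $\prod_{i=1}^{m}\b D^{j_i}\xrightarrow{\ \cong\ }\prod_{i=1}^{m}I^{j_i}=I^{d(J)}$, where $I^{d(J)}$ carries one coordinate $x_{il}$ for each vertex $il\in m(J)$. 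Under this identification it suffices to show that the two polyhedral products coincide as subsets of $I^{d(J)}$.

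Next I would write down the two membership criteria in terms of which coordinates of a point $\underline x=(x_{il})$ lie in the interior $\operatorname{int}I^1$. Since $\partial I^{j_i}$ is exactly the locus where at least one block coordinate lies in $\partial I^1=\mathbb S^0$, a point lies in $(\b D,\b S)^K$ iff there is $I\in K$ with block $i$ in $\partial I^{j_i}$ for every $i\notin I$; equivalently the set of \emph{fully interior blocks}
$$
F(\underline x)=\{\,i\in[m]:\ x_{il}\in\operatorname{int}I^1\ \text{for all}\ 1\le l\le j_i\,\}
$$
lies in $K$. On the other hand $\underline x\in\mathcal R_{K(J)}=(\mathbb D^1,\mathbb S^0)^{K(J)}$ iff the set of interior coordinates $S(\underline x)=\{\,v\in m(J):\ x_v\in\operatorname{int}I^1\,\}$ is a simplex of $K(J)$, because a real moment-angle-complex $(\mathbb D^1,\mathbb S^0)^{L}$ consists precisely of those points whose interior-coordinate set lies in $L$.

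The theorem then reduces to the combinatorial identity
$$
\sigma\in K(J)\ \Longleftrightarrow\ \operatorname{full}(\sigma)\in K,\qquad \operatorname{full}(\sigma)=\{\,i:\ \{i1,\ldots,ij_i\}\subseteq\sigma\,\},
$$
applied to $\sigma=S(\underline x)$, for which $\operatorname{full}(S(\underline x))=F(\underline x)$ by definition. This identity is immediate from Construction~\ref{simpmultwedge}: a subset $\sigma\subseteq m(J)$ fails to be a simplex of $K(J)$ exactly when it contains some generator $I(J)=\{i1,\ldots,ij_i:\,i\in I\}$ of $\MF(K(J))$ with $I\in\MF(K)$, and the containment $I(J)\subseteq\sigma$ is equivalent to $I\subseteq\operatorname{full}(\sigma)$; thus $\sigma\notin K(J)$ iff $\operatorname{full}(\sigma)$ contains a minimal non-face of $K$, i.e. iff $\operatorname{full}(\sigma)\notin K$. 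Combining the two criteria, both polyhedral products equal $\{\underline x\in I^{d(J)}:\ F(\underline x)\in K\}$, so the homeomorphism of cubes restricts to the asserted homeomorphism. The only genuine content is this last equivalence; the step I would watch most carefully is the implication $\operatorname{full}(\sigma)\in K\Rightarrow\sigma\in K(J)$, i.e. that avoiding every inflated generator $I(J)$ is the same as membership of $\operatorname{full}(\sigma)$ in $K$, which is exactly the defining property of $\MF(K(J))$. I expect no obstacle beyond making the block-wise interior/boundary analysis precise and verifying both directions of this equivalence.
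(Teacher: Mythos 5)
Your argument is correct. Note that the paper itself offers no proof of this statement --- it is quoted from \cite{BBCG15} --- so there is nothing internal to compare against; your proof is essentially the standard one from that reference: identify each pair $(\b{D}^{j_i},\b{S}^{j_i-1})$ with $(I^{j_i},\partial I^{j_i})=\bigl((I^1)^{j_i},\partial (I^1)^{j_i}\bigr)$, regard both polyhedral products as subsets of the common cube $I^{d(J)}$, and reduce the equality of the two subsets to the combinatorial fact that $\sigma\in K(J)$ if and only if $\{\,i:\{i1,\ldots,ij_i\}\subseteq\sigma\,\}\in K$, which follows directly from the description of $\MF(K(J))$ in Construction~\ref{simpmultwedge}. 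All the individual steps (the block-wise interior/boundary dichotomy, the reformulation of membership in a polyhedral product via the hereditary property of $K$, and both directions of the minimal non-face equivalence) check out.
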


\section{Main results}

In this section we are going to introduce our main results concerning (ordinary) higher Massey product structure in cohomology of certain polyhedral products. 

We start with a definition of a Massey product in cohomology of a differential graded algebra. First, we need a notion of a defining system for an ordered set of cohomology classes. Our presentation follows~\cite{Kr} and~\cite[Appendix $\Gamma$]{BP04}, in which all the properties of the Massey operation, necessary in what follows, can be found.

Suppose $(A,d)$ is a dga, $\alpha_{i}=[a_{i}]\in H^{*}[A,d]$ and $a_{i}\in A^{n_{i}}$ for $1\leq i\leq k$.
Then a \emph{defining system} for $(\alpha_{1},\ldots,\alpha_{k})$ is a $(k+1)\times (k+1)$-matrix $C$, such that the following conditions hold:
\begin{itemize}
\item[{(1)}] $c_{i,j}=0$, if $i\geq j$,
\item[{(2)}] $c_{i,i+1}=a_{i}$,
\item[{(3)}] $a\cdot E_{1,k+1}=dC-\bar{C}\cdot C$ for some $a=a(C)\in A$, where $\bar{c}_{i,j}=(-1)^{\deg(c_{i,j})}\cdot c_{i,j}$ and $E_{1,k+1}$ is a $(k+1)\times(k+1)$-matrix with '1' in the position $(1,k+1)$ and with all other entries being zero.
\end{itemize} 

A straightforward calculation shows that $d(a)=0$ and $a\in A^{m}$, $m=n_{1}+\ldots+n_{k}-k+2$. Thus, $a=a(C)$ is a cocycle for any defining system $C$, and its cohomology class $\alpha=[a]$ is defined.

\begin{defi}\label{defMassey}
A \emph{$k$-fold Massey product} $\langle\alpha_{1},\ldots,\alpha_{k}\rangle$ is said to be \emph{defined}, if there exists a defining system $C$ for it.
If so, this Massey product consists of all $\alpha=[a(C)]$, where $C$ is a defining system. A defined Massey product $\langle\alpha_{1},\ldots,\alpha_{k}\rangle$ is called 
\begin{itemize}
\item \emph{trivial} (or, \emph{vanishing}), if $[a(C)]=0$ for some $C$;
\item \emph{decomposable}, if $[a(C)]\in H^{+}(A)\cdot H^{+}(A)$ for some $C$;
\item \emph{strictly defined}, if $\langle\alpha_{1},\ldots,\alpha_{k}\rangle=\{[a(C)]\}$ for some $C$.
\end{itemize}
\end{defi}

\begin{rema}
Due to~\cite[Theorem 3]{Kr}, the set of cohomology classes $\langle\alpha_{1},\ldots,\alpha_{k}\rangle$ depends on the cohomology classes $\{\alpha_{1},\ldots,\alpha_{k}\}$, rather than on the particular representatives $\{a_{1},\ldots,a_{k}\}$. Therefore, if $[a_{i}]=0$ for some $1\leq i\leq k$ and $\langle\alpha_{1},\ldots,\alpha_{k}\rangle$ is defined, then $0\in\langle\alpha_{1},\ldots,\alpha_{k}\rangle$, thus the $k$-fold Massey product is trivial.
\end{rema}

\begin{exam}\label{examMassey}
Let us give all the relations on the elements of a defining system $C$ for a 2-, 3-, and 4-fold defined Massey product.

\begin{itemize}
\item[(1)] Suppose $k=2$.\\
If $\langle\alpha_{1},\alpha_{2}\rangle$ is defined, then we have:
$$
a=d(c_{1,3})-\bar{a}_1\cdot a_{2};
$$
Thus, our definition gives a usual cup product (up to sign) in (singular) cohomology of a space.

\item[(2)] Suppose $k=3$.\\
If $\langle\alpha_{1},\alpha_{2},\alpha_{3}\rangle$ is defined, then we have:
$$
a=d(c_{1,4})-\bar{a}_1\cdot c_{2,4}-\bar{c}_{1,3}\cdot a_{3},
$$
\[
\begin{aligned}
d(c_{1,3})&=&\bar{a}_1\cdot a_{2},\\
d(c_{2,4})&=&\bar{a}_2\cdot a_{3};    
\end{aligned}
\]

\item[(3)] Suppose $k=4$.\\
If $\langle\alpha_{1},\alpha_{2},\alpha_{3},\alpha_{4}\rangle$ is defined, then we have:
$$
a=d(c_{1,5})-\bar{a}_1\cdot c_{2,5}-\bar{c}_{1,3}\cdot c_{3,5}-\bar{c}_{1,4}\cdot a_{4},
$$
\[
\begin{aligned}
d(c_{1,3})&=&\bar{a}_1\cdot a_{2},\\
d(c_{1,4})&=&\bar{a}_1\cdot c_{2,4}+\bar{c}_{1,3}\cdot a_{3},\\
d(c_{2,4})&=&\bar{a}_2\cdot a_{3},\\
d(c_{2,5})&=&\bar{a}_2\cdot c_{3,5}+\bar{c}_{2,4}\cdot a_{4},\\
d(c_{3,5})&=&\bar{a}_3\cdot a_{4}. 
\end{aligned}
\]
\end{itemize}
\end{exam}

Therefore, from Definition~\ref{defMassey} and Example~\ref{examMassey} one can see easily that for a higher Massey product $\langle\alpha_{1},\ldots,\alpha_{k}\rangle$ to be defined it is necessary that all the Massey subproducts of consecutive elements in the $k$-tuple $(\alpha_{1},\ldots,\alpha_{k})$ are defined and \emph{vanish simultaneously}. If all the (consecutive) Massey subproducts vanish, but not simultaneously, then the whole Massey product may even not exist, see~\cite[Example I]{N}. 

\begin{rema}
In~\cite{M} matric Massey products were defined and studied, and it was also proved that differentials in the Eilenberg-Moore spectral sequence of a path loop fibration for any path connected simply connected space are completely determined by higher Massey products. However, the opposite statement is not true, see~\cite[Example II]{N}. 
\end{rema}

Let us consider a set of induced subcomplexes $K_{I_j}$ on pairwisely disjoint subsets of vertices $\{I_j\}$ for $1\leq j\leq k$ and their cohomology classes $\alpha_{j}\in\tilde{H}^{d(j)}(K_{I_j})\subset H^{m(j)}(\zk), 1\leq j\leq k$, where $m(j)=d(j)+|I_j|+1$ by Theorem~\ref{zkcoh}. If an $s$-fold Massey product ($s\leq k$) of consecutive classes $\langle\alpha_{i+1},\ldots,\alpha_{i+s}\rangle$ for $1\leq i+1<i+s\leq k$ is defined, then by Theorem~\ref{zkcoh} and Theorem~\ref{mgrad} $\langle\alpha_{i+1},\ldots,\alpha_{i+s}\rangle$ is a subset of
$$
\tilde{H}^{d(i+1,i+s)}(K_{I_{i+1}\sqcup\ldots\sqcup {I_{i+s}}})\subset H^{m(i+1,i+s)}(\zk),
$$
where $d(i+1,i+s)=d(i+1)+\ldots+d(i+s)+1$ and $m(i+1,i+s)=m(i+1)+\ldots+m(i+s)-s+2$. 

\begin{rema}
If all $m(j),1\leq j\leq k$ are odd numbers, then $m(i,j)=m(i)+\ldots+m(j)-(j-i+1)+2=\deg c_{i,j+1}+1$ for a defining system $C$ is an even number for all $1\leq i<i+1\leq j\leq k$. It follows, that in this case all the elements in $C$ have odd degrees in $R^{*}(K)$.
\end{rema}

Now our goal is to determine the conditions sufficient for a Massey product $\langle\alpha_{1},\ldots,\alpha_{k}\rangle$ of cohomology classes introduced above to be strictly defined. 

\begin{lemm}\label{strictMassey}
Suppose, in the notation above, $k\geq 3$ and
\begin{itemize}
\item[(1)] $\tilde{H}^{d(s,r+s)-1}(K_{I_{s}\sqcup\ldots\sqcup{I_{r+s}}})=0,1\leq s\leq k-r,1\leq r\leq k-2$;
\item[(2)] Any of the following two conditions holds: 
\begin{itemize}
\item[(a)] The $k$-fold Massey product $\langle\alpha_{1},\ldots,\alpha_{k}\rangle$ is defined, or
\item[(b)] $\tilde{H}^{d(s,r+s)}(K_{I_{s}\sqcup\ldots\sqcup{I_{r+s}}})=0,1\leq s\leq k-r,1\leq r\leq k-2$.
\end{itemize} 
\end{itemize}
Then the $k$-fold Massey product $\langle\alpha_{1},\ldots,\alpha_{k}\rangle$ is strictly defined.
\end{lemm}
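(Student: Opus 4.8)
The plan is to work entirely inside the multigraded differential graded algebra $R(K)$ of Theorem~\ref{zkcoh}, using the refinement of Theorem~\ref{mgrad} to keep track of multidegrees. Recall that a defining system for $(\alpha_1,\dots,\alpha_k)$ amounts to a collection of cochains $c_{i,j}\in R(K)$, $1\le i<j\le k+1$, with $c_{i,i+1}=a_i$ and
$$
d c_{i,j}=\sum_{i<l<j}\bar{c}_{i,l}\,c_{l,j}\qquad (j-i\ge 2),
$$
the class $[a]$ of $a=d c_{1,k+1}-\sum_{1<l<k+1}\bar{c}_{1,l}c_{l,k+1}$ being the corresponding element of the Massey product; I may take $c_{1,k+1}=0$, since it only alters $a$ by a coboundary. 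The first observation is that, as the product in $R(K)$ is multigraded and $d$ preserves multidegree, and as the sets $I_1,\dots,I_k$ are pairwise disjoint, every $c_{i,j}$ filling the consecutive subproduct $\langle\alpha_i,\dots,\alpha_{j-1}\rangle$ may be chosen homogeneous in the multigrading, supported on the vertex set $I_i\sqcup\dots\sqcup I_{j-1}$. By Theorem~\ref{mgrad} the two cohomology groups relevant to such a cochain are then
$$
\widetilde{H}^{\,d(i,j-1)-1}(K_{I_i\sqcup\dots\sqcup I_{j-1}})\quad\text{and}\quad \widetilde{H}^{\,d(i,j-1)}(K_{I_i\sqcup\dots\sqcup I_{j-1}}),
$$
the first housing the cocycle-ambiguity of $c_{i,j}$ itself and the second housing the value $d c_{i,j}$ of the subproduct. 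Setting $s=i$ and $r=j-i-1$, these are precisely the groups appearing in~(1) and~(2b), and the range $2\le j-i\le k-1$ of proper fill-ins corresponds exactly to $1\le r\le k-2$, $1\le s\le k-r$.

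Next I would dispose of existence. Under hypothesis~(2a) the product is defined by assumption. Under~(2b) I would argue by induction on the length $j-i$: the element $\sum_{i<l<j}\bar{c}_{i,l}c_{l,j}$ is a cocycle (a standard check using the lower relations), homogeneous and supported on $I_i\sqcup\dots\sqcup I_{j-1}$, so its class lies in $\widetilde{H}^{\,d(i,j-1)}(K_{I_i\sqcup\dots\sqcup I_{j-1}})$, which vanishes for every proper subproduct by~(2b); hence it is a coboundary and a homogeneous $c_{i,j}$ can be chosen, completing the defining system. Thus in either case a defining system exists and $\langle\alpha_1,\dots,\alpha_k\rangle$ is defined.

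For strict definedness the heart of the matter is hypothesis~(1): it says precisely that for every proper consecutive subproduct the group $\widetilde{H}^{\,d(i,j-1)-1}(K_{I_i\sqcup\dots\sqcup I_{j-1}})$ vanishes, so each fill-in cochain $c_{i,j}$ with $2\le j-i\le k-1$ is unique up to a coboundary. Given two homogeneous defining systems $C$ and $C'$, I would induct on the length $j-i$ to show that, after adjusting $C'$ without changing $[a(C')]$, one can arrange $c'_{i,j}$ and $c_{i,j}$ to agree up to coboundary for all proper $(i,j)$; feeding this into the formula for $a$ then yields $a(C')-a(C)=d(\,\cdot\,)$, i.e.\ $[a(C')]=[a(C)]$, so the Massey product equals the single class $\{[a(C)]\}$ and is strictly defined. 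At length $j-i=2$ the difference $c'_{i,i+2}-c_{i,i+2}$ is a cocycle in the group $\widetilde{H}^{\,d(i,i+1)-1}(K_{I_i\sqcup I_{i+1}})$, which vanishes by~(1) with $r=1$, hence is exact; this starts the induction.

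The main obstacle is the cascade inherent in the inductive step: because the $c_{i,l}$ are not cocycles, replacing a lower fill-in by a cohomologous one alters the defining-system equations at all higher lengths, so one cannot simply match the entries term by term. The clean way to handle this is to realize the required coboundary corrections as a gauge transformation of the upper-triangular matrix $C'$ by a unipotent upper-triangular matrix $W=I+(w_{i,j})$ whose entries $w_{i,j}$ are the homotopies produced by~(1), and to verify that such a transformation carries one defining system to another while changing $a$ only by an exact term --- the matric computation of~\cite{M}. Hypothesis~(1) is exactly what guarantees, length by length, that the relevant cocycle discrepancies are exact and hence that the homotopies $w_{i,j}$ exist in the correct multidegree; carrying the bookkeeping of signs and multidegrees through this gauge change is the only genuinely technical point, all of whose inputs are supplied by Theorems~\ref{zkcoh} and~\ref{mgrad} together with the vanishing in~(1).
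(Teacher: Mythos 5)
Your setup (multigraded reduction, identification of the two families of groups $\tilde{H}^{d(i,j-1)-1}$ and $\tilde{H}^{d(i,j-1)}$ with hypotheses (1) and (2b), and the inductive construction of a defining system under (2b)) agrees with the paper. For uniqueness, however, you take a genuinely different route. You propose a direct gauge/homotopy argument: build $w_{i,j}$ by induction on $j-i$ so that $c'_{i,j}-c_{i,j}$ equals $dw_{i,j}$ plus correction terms of the form $\pm\bar{c}'_{i,l}w_{l,j}\mp\bar{w}_{i,l}c_{l,j}$; the obstruction at each stage is then a cocycle of the same (multi)degree as $c_{i,j}$, living in the group of hypothesis (1), hence exact. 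The paper instead interpolates between $C$ and $C'$ through a chain of defining systems, one anti-diagonal and one row at a time; there the corrections $b_{i,j}$ must solve $d b_{i,j}=(\text{prescribed cocycle})$, whose obstruction sits one cohomological degree higher, in the groups of (2b) --- which need not vanish under alternative (2a). The paper's key device, absent from your proposal, is to recognize that obstruction cocycle as a representative of a \emph{shorter} Massey product one of whose arguments is the class $[b_{s+1}]=0$ (itself killed by (1)); strict definedness of that shorter product, supplied by the induction on the order $k$, forces the obstruction to vanish. If your bookkeeping closes, your route buys a single induction and uses only (1) for uniqueness; the paper's buys a self-contained argument at the price of a triple induction.

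Two caveats. First, the entire weight of your uniqueness step rests on the deferred claim that the gauge obstruction is a cocycle at every level and that gauge-equivalent defining systems yield cohomologous values $a(C)$. This is a genuine sign-laden computation in $R(K)$ (for instance, the cubic terms $\bar{c}'_{i,l}\bar{w}_{l,p}c_{p,j}$ and $\overline{\bar{c}'_{i,l}w_{l,p}}\,c_{p,j}$ only cancel after the conventions are tuned), and it is not supplied by Theorems~\ref{zkcoh} and~\ref{mgrad} or by the vanishing in (1); you must either carry it out or point to a precise statement in~\cite{M}. Your assertion that ``hypothesis (1) is exactly what guarantees \ldots that the relevant cocycle discrepancies are exact'' is only true for the gauge parametrization; for the naive term-by-term matching you rightly reject, the discrepancies land in the groups of (2b), and conflating the two would break the case where only (2a) holds. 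Second, strict definedness quantifies over \emph{all} defining systems, so you should first justify replacing an arbitrary defining system by its multidegree-homogeneous projection without changing $[a(C)]$; this is easy because the defining equations are multigraded, but it needs to be said.
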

\begin{proof}
Let us use induction on the order $k\geq 3$ of the Massey product. For the base case $k=3$ the condition (2b) implies that the 2-fold products $\langle\alpha_{1},\alpha_{2}\rangle$ and $\langle\alpha_{2},\alpha_{3}\rangle$ vanish simultaneously and, by Example~\ref{examMassey} (2), we get that the triple Massey product $\langle\alpha_{1},\alpha_{2},\alpha_{3}\rangle$ is defined. Moreover, by the condition (1) and Theorem~\ref{mgrad}, the indeterminacy in $\langle\alpha_{1},\alpha_{2},\alpha_{3}\rangle$ is trivial, thus this triple Massey product is strictly defined. 

Now assume the statement holds for Massey products of order less than $k\geq 4$. We first prove that conditions (1) and (2b) imply  $\langle\alpha_{1},\ldots,\alpha_{k}\rangle$ is defined. To make the induction step note that in Definition~\ref{defMassey} all the Massey products of consecutive elements of orders $2,\ldots,k-1$ for $k\geq 4$ are defined by the inductive assumption and are not only trivial (by (2b) and the multiplicative structure in $H^*(\zk)$ given by Theorem~\ref{zkcoh}) but also strictly defined, and thus contain only zero elements. Therefore, a defining system $C$ exists and a cocycle $a(C)$ from Definition~\ref{defMassey} (3) represents a cohomology class in $\langle\alpha_{1},\ldots,\alpha_{k}\rangle$.

Now we shall prove that this Massey product contains a unique element, that is, we need to prove that $[a(C)]=[a(C')]$ for any two defining systems $C$ and $C'$. By the inductive assumption we suppose that this statement is true for defined higher Massey products of order less than $k\geq 4$. We divide the induction step into 2 parts, in both we also proceed by induction.
\\
I. Let us determine a sequence of defining systems $C(1),\ldots,C(k-1)$ for $\langle\alpha_{1},\ldots,\alpha_{k}\rangle$ such that the following conditions hold:
\begin{itemize}
\item[(1)] $C(1)=C$;
\item[(2)] $c_{ij}(r)=c'_{ij}$, if $j-i\leq r$;
\item[(3)] $[a(C(r))]=[a(C(r+1))]$, for all $1\leq r\leq k-2$.
\end{itemize}
Note that (2) implies that $c_{i,i+1}(r)=a_{i}=a'_{i}$ for all $1\leq i\leq k$, and $C(k-1)=C'$. We use induction on $r\geq 1$ here to determine the defining systems $C(r)$. As $C(1)=C$ by (1), we need to prove the induction step assuming that $C(r)$ is already defined. 

Consider a cochain 
$$
b_{s}=c'_{s,r+s+1}-c_{s,r+s+1}(r)
$$
for $1\leq s\leq k-r$. By Definition~\ref{defMassey} one has: $d(b_{s})=d(c'_{s,r+s+1})-d(c_{s,r+s+1}(r))=\sum\limits_{p=s+1}^{r+s}\bar{c}'_{s,p}c'_{p,r+s+1}-
\sum\limits_{p=s+1}^{r+s}\bar{c}_{s,p}(r)c_{p,r+s+1}(r)=0$ by the property (2) of $C(r)$ above. Thus, $b_{s}$ is a cocycle and, therefore, by Theorem~\ref{zkcoh} and Theorem~\ref{mgrad}, $[b_s]\in\tilde{H}^{d(s,r+s)-1}(K_{I_{s}\sqcup\ldots\sqcup{I_{r+s}}})=0$ for any $1\leq s\leq k-r$, by the condition (1) of the statement we are to prove (as here $1\leq r\leq k-2$ by (3) above).  
\\
II. The construction of the defining system $C(r+1)$ will be completed if we determine a sequence of defining systems $C(r,s)$ ($0\leq s\leq k-r$) for $\langle\alpha_{1},\ldots,\alpha_{k}\rangle$, such that:
\begin{itemize}
\item[(1')] $C(r,0)=C(r)$;
\item[(2')] $c_{ij}(r,s)=c_{ij}(r)$, if $j-i\leq r$, and
$$
c_{ij}(r,s)=\begin{cases}
c_{ij}(r),&\text{if $i>s$; (*)}\\
c_{ij}(r)+b_{i},&\text{if $i\leq s$ (**)}
\end{cases}
$$
when $j-i=r+1\geq 2$;
\item[(3')] $[a(C(r,s))]=[a(C(r,s+1))]$, for all $0\leq s\leq k-r-1$. 
\end{itemize}
Note that condition (2')(**) for $j=i+r+1$ implies that $c_{ij}(r,k-r)=c_{ij}(r)+(c'_{i,r+1+i}-c_{i,r+1+i}(r))=c'_{ij}$ which is equal to $c_{ij}(r+1)$ by (2) above and thus we can define $C(r+1)=C(r,k-r)$ and the proof will be finished by induction.

Therefore, to complete the proof it suffices to determine a sequence of defining systems $C(r,s)$. We shall do it by induction on $s\geq 0$. The base case $s=0$ is done by (1') above. Now assume, we already constructed $C(r,s)$ and let us determine the defining system $C(r,s+1)$.

If $i>s+1$, then we can set $c_{ij}(r,s+1)=c_{ij}(r,s)$, see (2')(*). Similarly, we can also set $c_{ij}(r,s+1)=c_{ij}(r,s)$ if $j<s+r+2$, see (2')(**). Now suppose $1\leq i\leq s+1<s+2+r\leq j\leq k+1$ and let us determine a set of cochains $\{b_{ij}\}$ such that
$$
c_{ij}(r,s+1)=c_{ij}(r,s)+b_{ij}\eqno (***)
$$
by induction on $j-i\geq r+1$.

By (**) $c_{s+1,r+2+s}(r,s+1)=c_{s+1,r+2+s}(r)+b_{s+1}$ which is equal to $c_{s+1,r+2+s}(r,s)+b_{s+1}$ due to (*). Therefore, we can set $b_{s+1,r+2+s}=b_{s+1}$. Suppose by inductive assumption that the cochains $b_{ij}$ are already defined for $r+1\leq j-i<w$. Then (***) implies that 
$d(b_{ij})=d(c_{ij}(r,s+1))-d(c_{ij}(r,s))=\sum\limits_{p=i+1}^{j-1}(\bar{c}_{i,p}(r,s)+\bar{b}_{i,p})(c_{p,j}(r,s)+b_{p,j})-
\sum\limits_{p=i+1}^{j-1}\bar{c}_{i,p}(r,s)c_{p,j}(r,s)=\sum\limits_{p=i+1}^{s+1}\bar{c}_{i,p}(r,s)b_{p,j}+
\sum\limits_{p=r+s+2}^{j-1}\bar{b}_{i,p}c_{p,j}(r,s)$, where the last equality holds because $\sum\limits_{p=i+1}^{j-1}\bar{b}_{i,p}b_{p,j}=0$, since $b_{p,j}=0$ when $p>s+1$, and $b_{i,p}=0$ when $p<r+2+s$, and one gets the following equality for $r+1\leq j-i<w$:
$$
d(b_{ij})=\sum\limits_{p=i+1}^{s+1}\bar{c}_{i,p}(r,s)b_{p,j}+
\sum\limits_{p=r+s+2}^{j-1}\bar{b}_{i,p}c_{p,j}(r,s)\eqno (1.1)
$$   

Then for $j-i=w$ the right hand side of (1.1) is a cocycle $a$ representing an element $\alpha=[a]$ in 
$$
-\langle\alpha_{i},\ldots,\alpha_{s},[b_{s+1}],\alpha_{s+r+2},
\ldots\alpha_{j-1}\rangle \eqno (1.2)
$$ 
as can be shown by induction on $j-i\geq r+2$. Indeed, for $j-i=r+2$ and $1\leq i\leq s+1<s+2+r\leq j\leq k+1$ one has two cases: (1) $i=s+1,j=s+r+3$ and the right hand side of (1.1) takes the form $\bar{b}_{s+1,r+s+2}c_{s+r+2,s+r+3}=\bar{b}_{s+1}a_{s+r+2}$ which represents $-\langle[b_{s+1}],\alpha_{s+r+2}\rangle$; (2) $i=s,j=s+r+2$ and the right hand side of (1.1) takes the form $\bar{c}_{s,s+1}b_{s+1,s+r+2}=\bar{a}_{s}b_{s+1}$ which represents $-\langle\alpha_{s},[b_{s+1}]\rangle$. The induction step follows from the formula (1.1) itself, Definition~\ref{defMassey}, and the inductive assumption.   

As $[b_{s+1}]=0$, we conclude that the Massey product in (1.2) is trivial. Moreover, as $r\geq 1$ one can easily see that its order is $<(j-1)-i+1=j-i\leq k$ and we can apply the inductive assumption on $k$ to this Massey product, since 
$[b_{s+1}]\in\tilde{H}^{\beta}(K_{I_{s+1}\sqcup\ldots\sqcup{I_{r+s+1}}})$ for $\beta=d(s+1,r+s+1)-1=d(s+1)+\ldots+d(r+s+1)$. Therefore, by the assumption of induction on $k$ one has that the Massey product
$$
0\in\langle\alpha_{i},\ldots,\alpha_{s},[b_{s+1}],\alpha_{s+r+2},
\ldots,\alpha_{j-1}\rangle
$$ 
is strictly defined, that is, it contains only zero. It follows that (1.1) has a solution for $j-i=w$, and thus for all $1\leq i<j\leq k+1$ the equality (1.1) gives:
$$
d(b_{ij})=\sum\limits_{p=i+1}^{j-1}\bar{c}_{i,p}(r,s+1)c_{p,j}(r,s+1)-
\sum\limits_{p=i+1}^{j-1}\bar{c}_{i,p}(r,s)c_{p,j}(r,s).
$$
The above formula means that $C(r,s+1)$ is also a defining system for $\langle\alpha_{1},\ldots,\alpha_{k}\rangle$ and that $[a(C(r,s+1))]=[a(C(r,s))]$ (when $j-i=k$ in the above formula).
This finishes the proof by induction on the order $k$ of the Massey product.
\end{proof}

\begin{rema}
Note that if $I_{j}\in\MF(K)$ for $1\leq j\leq k$ are pairwisely disjoint sets of vertices in $[m]$, then $K_{I_j}=\partial\Delta^{|I_j|-1}\simeq S^{|I_{j}|-2}$ and for its top cohomology class $\alpha_j$ one has that its degree $m(j)=d(j)+|I_j|+1=2|I_j|-1$ is always odd. In order to obtain a defined and/or strictly defined $k$-fold Massey product $\langle\alpha_{1},\ldots,\alpha_{k}\rangle$ one needs to check the condition (2b) (resp., both (1) and (2b)) of Lemma~\ref{strictMassey}, that is, to calculate $2+\ldots+(k-1)=\frac{k(k-1)}{2}-1$ (resp., $k(k-1)-2$) multigraded Betti numbers of $K$, see Theorem~\ref{mgrad}.
\end{rema}

\begin{exam}\label{examLemma}
(1) Suppose $K$ is a nerve complex of a hexagon $P=P_6$. Thus, $K$ is a 1-dimensional simplicial complex on the vertex set $[6]$ and 
$$
\MF(K)=\{(1,3),(1,4),(1,5),(2,4),(2,5),(2,6),(3,5),(3,6),(4,6)\}.
$$
Consider the three minimal non-faces $I_{j}=(j,j+3)\in\MF(K)$ for $1\leq j\leq 3$ and the corresponding cohomology classes $\alpha_{j}=[v_{j}u_{j+3}]\in{H}^{3}(\zp)$. Obviously, there exist two defining systems for $\langle\alpha_{1},\alpha_{2},\alpha_{3}\rangle\subset{H}^{8}(\zp)$, which give a zero element and a generator of $H^{8}(\zp)$ as elements of the triple Massey product above, respectively. Therefore, $\langle\alpha_{1},\alpha_{2},\alpha_{3}\rangle$ is defined, but not strictly defined. One can see easily that in this case
$$
\rk\tilde{H}^{0}(K_{I_{1}\sqcup I_{2}})=\rk\tilde{H}^{0}(K_{I_{2}\sqcup I_{3}})=1,
$$ 
therefore, the condition (1) of Lemma~\ref{strictMassey} is not satisfied.\\
(2) Suppose $K$ is one of the five obstruction graphs introduced in~\cite[Theorem 6.1.1]{DS}. One can easily check that each of them satisfies both the conditions (1) and (2b) of Lemma~\ref{strictMassey}, thus a nontrivial triple Massey product of 3-dimensional classes in $H^*(\zk)$ is strictly defined (cf.~\cite[\S6.2]{DS}).
\end{exam}

Now we shall apply Lemma~\ref{strictMassey} to the problem of finding a family of generalized moment-angle manifolds $\zp^{J}$ that contains an $l$-connected manifold with an $r$-fold strictly defined nontrivial Massey product in $H^*(\zp^{J})$ for any prescribed $l,r\geq 2$. 

\begin{defi}\label{deftrunc}
Suppose $n\geq 2$. Let us define simplicial complexes $K(n)$ and $\bar{K}(n)$ on the vertex set $[2n]=\{1,2,\ldots,2n\}$ with the following sets of minimal non-faces
$$
\MF(K(n))=\{(k,n+k+i), 0\leq i\leq n-2,1\leq k\leq n-i\}
$$
and
$$
\MF(\bar{K}(n))=\{(k,n+k+i), 0\leq i\leq n-1,1\leq k\leq n-i\},
$$
respectively. We denote by
$$
K(n,s)=K(n)(s,\ldots,s,1,\ldots,1)
$$ 
and 
$$
\bar{K}(n,s)=\bar{K}(n)(s,\ldots,s,1,\ldots,1)
$$ 
for $s\geq 1$ their simplicial multiwedges in which the first $n$ vertices are wedged and the last $n$ vertices remain unchanged.

Finally, we denote by $P(n)$ an $n$-dimensional 2-truncated cube with $m=m(P(n))=\frac{n(n+3)}{2}-1$ facets obtained from the cube $I^n$ with the facets $F_{1},\ldots,F_{2n}$, where $F_{i}\cap F_{n+i}=\varnothing$ for $1\leq i\leq n$ such that the induced subcomplex in $K_{P(n)}$ on the vertex set $[2n]$ is combinatorially equivalent to $K(n)$, see~\cite[Definition, p.15]{L1}. Similarly, we denote by $P(n,s)=P(n)(J_{ns})$ a simple polytope with $d(J_{ns})$ facets, where 
$$
J_{ns}=(\underbrace{s,\ldots,s}\limits_{n},\underbrace{1,\ldots,1}\limits_{n},j_{2n+1,n,s},\ldots,j_{m,n,s}),
$$
the induced subcomplex in $K_{P(n,s)}$ on the vertex set
$$
\{11,\ldots,1s,\ldots,n1,\ldots,ns,n+1,\ldots,2n\}
$$
is combinatorially equivalent to $K(n,s)$, and $\{j_{p,n,s}\}$ for $p>2n$ are arbitrary positive integers satisfying the property: $j_{p,n,1}=1$ and for any $I\in\MF(K_{P(n,s)})$ one has $|I|\geq s+1$, providing $K_{P(n,s)}$ is $s$-connected for all $s\geq 1$.  
\end{defi}

\begin{theo}\label{mainMassey}
Consider a family of smooth closed manifolds with a compact torus action: 
$$
\mathcal{F}=\{\zp\,|\,P=P(n,s), n\geq 2, s\geq 1\}.
$$
Then for any given $l,r\geq 2$ there is an $l$-connected manifold $M\in\mathcal{F}$ with a strictly defined nontrivial $r$-fold Massey product in $H^*(M)$.
\end{theo}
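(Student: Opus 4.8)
The plan is to exhibit, for prescribed $l,r\ge 2$, the manifold $M=\zp$ with $P=P(r,s)$ for a large enough $s$, and to take as the desired operation the $r$-fold product $\langle\alpha_1,\ldots,\alpha_r\rangle$ of the classes dual to the $r$ ``short'' minimal non-faces. First I would set $n=r$; this choice is forced, since only for $n=r$ is the full union of blocks all of $K(r)$, whereas for $n>r$ it would be a proper piece, which we shall see is acyclic. Then I choose $s\ge\lceil l/2\rceil$. As $K_{P(r,s)}$ is $s$-connected by Definition~\ref{deftrunc}, the connectivity statement for $\zk$ recorded in Section~2 makes $M$ a $2s$-connected, hence $l$-connected, smooth closed manifold. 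For $1\le j\le r$ put $I_j(J)=\{j1,\ldots,js,(r+j)1\}$, the $J$-image of $(j,r+j)\in\MF(K(r))$; then the induced subcomplex $K_{P(r,s)}$ on $I_j(J)$ is $\partial\Delta^{s}\simeq S^{s-1}$, and I take $\alpha_j\in\widetilde H^{s-1}(\partial\Delta^s)\subset H^{2s+1}(M)$.

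The key combinatorial observation I would establish is that the restriction of $K(r)$ to a block $I_\sigma\sqcup\cdots\sqcup I_{\sigma+\rho}$ of consecutive indices with $\rho+1\le r-1$ (I write $\sigma,\rho$ for the running indices of Lemma~\ref{strictMassey} to avoid clash with $s,r$) is combinatorially $\bar K(\rho+1)$, the extra corner non-face $(\sigma,r+\sigma+\rho)$ appearing exactly because the block is proper, while on the full index set it is $K(r)$ itself. Since forming induced subcomplexes commutes with the $J$-construction, the corresponding induced subcomplex of $K_{P(r,s)}$ is $\bar K(\rho+1)(J')$ on proper blocks and $K(r,s)$ on the full one. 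Invoking the degree-shift property $\widetilde H^{i}(L(J))\cong\widetilde H^{i-\delta}(L)$, with $\delta=\sum_t(j_t-1)$, of the $J$-construction (see~\cite{BBCG15,Ch-P}), the groups appearing in conditions (1) and (2b) of Lemma~\ref{strictMassey} collapse: for a proper block $\widetilde H^{d(\sigma,\sigma+\rho)-1}$ and $\widetilde H^{d(\sigma,\sigma+\rho)}$ become $\widetilde H^{0}(\bar K(\rho+1))$ and $\widetilde H^{1}(\bar K(\rho+1))$. It therefore suffices to verify that each $\bar K(m)$, $2\le m\le r-1$, is connected with $\widetilde H^1=0$ (indeed acyclic; for $m=2$ it is the path on vertices $1,2,3,4$). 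Granting this, Lemma~\ref{strictMassey} applies and $\langle\alpha_1,\ldots,\alpha_r\rangle$ is strictly defined; moreover the same shift, with $d(1,r)=r(s-1)+1$ and $\delta=r(s-1)$, identifies its ambient group $\widetilde H^{d(1,r)}(K(r,s))\cong\widetilde H^1(K(r))$, which is nonzero.

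It remains to see that the unique value of this product does not vanish. For $s=1$ we have $P(r,1)=P(r)$, and the required nontrivial $r$-fold Massey product of three-dimensional classes is precisely the one constructed in~\cite{L1}. To reach arbitrary $s$ I would use that $\mathcal Z_{K(r,s)}=\mathcal Z_{K(r)(J)}$ is identified, via Theorem~\ref{homeoBBCG}, with a generalized moment-angle-complex over $K(r)$, whose cochain dga, in the multidegrees supported on the distinguished vertices, is obtained from the algebra $R(K(r))$ of Theorem~\ref{zkcoh} by regrading the wedged generators ($\deg u_i=2s-1$, $\deg v_i=2s$) without altering any vanishing product or the differential. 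A defining system for $\langle\alpha_1,\ldots,\alpha_r\rangle$ in $\mathcal Z_{K(r)}$ then corresponds entry by entry to one in $\mathcal Z_{K(r,s)}$ whose value is the image of the nonzero $s=1$ value under the shift isomorphism; strict definedness forces this to be the whole product, so it is nontrivial.

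The step I expect to be hardest is this last transfer. Establishing the three combinatorial facts, namely the restriction of $K(r)$ to a proper block being $\bar K(\rho+1)$, the acyclicity of $\bar K(m)$, and $\widetilde H^1(K(r))\ne0$, is routine bookkeeping with minimal non-faces. The delicate point is the naturality of the \emph{strictly defined} Massey operation under the regrading induced by the $J$-construction: one must check that the correspondence of cochain algebras is a genuine morphism respecting products and differentials in each multidegree, so that nontriviality proved for $s=1$ in~\cite{L1} genuinely propagates to all $s\ge1$, rather than merely landing in a nonzero ambient group.
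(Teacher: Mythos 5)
Your reduction of strict definedness to Lemma~\ref{strictMassey} coincides with the paper's argument: the same classes $\alpha_j$ supported on the minimal non-faces $\{j1,\ldots,js,n+j\}$, the same identification of proper consecutive blocks with $\bar K(\rho+1,s)$, and the same passage from the cohomology of $\bar K(\rho+1,s)$ to that of $\bar K(\rho+1)$ via the multigraded Betti-number invariance under the $J$-construction (the paper cites the multigraded version of Ayzenberg's Corollary~7.9 together with Hochster's formula, which is exactly your ``degree-shift'' in the top multidegree), concluding with the contractibility of the chain of simplices $\bar K(m)$. The connectivity bookkeeping via $s\ge\lceil l/2\rceil$ is also the paper's.

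The gap is where you anticipated it: the transfer of nontriviality from $s=1$ to general $s$. The results you invoke give an isomorphism of graded \emph{rings} $H^*(\mathcal R_K^J)\cong H^*(\mathcal R_{K(J)})$, and a ring isomorphism does not transport Massey products; for that you need a zigzag of quasi-isomorphisms of differential graded algebras (or at least an explicit correspondence of defining systems), and the claim that the cochain dga of $\mathcal Z_{K(r,s)}$ ``in the multidegrees supported on the distinguished vertices'' is $R(K(r))$ with regraded generators is precisely the statement you would have to prove --- it is not contained in Theorem~\ref{homeoBBCG} or in the cited cohomology computations. Moreover, even granting such a correspondence, strict definedness only tells you the product is single-valued; you would still need the comparison map to carry the chosen defining system to a defining system for the \emph{same} classes $\alpha_j$ on the other side. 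The paper sidesteps all of this: it writes down the representatives $u_{j1}v_{j2}\cdots v_{js}v_{n+j}\in R^*(K_P)$, checks directly that they extend to a defining system $C$ for every $s\ge1$, and observes that $a(C)$ is (up to sign) the product of the two explicit cocycles $v_{2n}v_{22}\cdots v_{ns}u_{21}\cdots u_{n1}$ and $v_{11}\cdots v_{1s}u_{n+1}\cdots u_{2n-1}$, hence represents a nonzero decomposable class. Replacing your transfer argument by this one-line explicit computation closes the gap and makes the base case from~\cite{L1} unnecessary.
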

\begin{proof}
First note, that the nerve complex of a simple polytope $P=P(r,[\frac{l+1}{2}])$
for $r,l\geq 2$ is $[\frac{l+1}{2}]$-connected and, thus, $\zp$ is $l$-connected. Therefore, it suffices to prove that $\zp$ has a strictly defined nontrivial Massey product of order $n\geq 2$ for a simple polytope $P=P(n,s)$ with $s\geq 1$.

Consider the following set of $n$ minimal non-faces $I_{j}\in\MF(K)$ of its nerve complex $K=K_{P(n,s)}$: $\{I_{j}=(j1,\ldots,js,n+j)|\,1\leq j\leq n\}$ and the top cohomology classes $\alpha_{j}$ of the induced subcomplexes $K_{I_j}$: $\alpha_{j}\in\tilde{H}^{s-1}(K_{I_j})$. Then in the notation of Lemma~\ref{strictMassey} one has: $d(j)=s-1$ and $m(j)=2s+1$. To apply Lemma~\ref{strictMassey} to the ordered set of cohomology classes $(\alpha_{1},\ldots,\alpha_{n})$ we need to check the conditions (1) and (2b) of the lemma for them.

By Definition~\ref{deftrunc} the induced subcomplex of $K$ on the vertex set
$$
\{11,\ldots,1s,\ldots,n1,\ldots,ns,n+1,\ldots,2n\}
$$
coincides with $K(n,s)$. Observe that the induced subcomplex of $K$ on a subset of its vertex set of the following form
$$
\{i1,\ldots,is,(i+1)1,\ldots,(i+1)s,\ldots,j1,\ldots,js,n+i,n+i+1,\ldots,n+j\}
$$
for $1\leq i\leq j\leq n, j-i\leq n-2$ is combinatorially equivalent to $\bar{K}(j-i+1,s)$. 

Therefore, to apply Lemma~\ref{strictMassey} we need to show that for $\bar{K}=\bar{K}(n,s)$ with the same minimal non-faces $I_{j}, 1\leq j\leq n$ as above, the equalities 
$$
\tilde{H}^{d(v,w)}(\bar{K}_{I_{v}\sqcup\ldots\sqcup{I_{w}}})=0
$$
and
$$
\tilde{H}^{d(v,w)-1}(\bar{K}_{I_{v}\sqcup\ldots\sqcup{I_{w}}})=0
$$
hold for all $1\leq v<w\leq n$ and $n\geq 2$.

Again, it is obvious from the definition of $\bar{K}(n,s)$ that such an induced subcomplex $\bar{K}_{I_{v}\sqcup\ldots\sqcup{I_{w}}}$ is combinatorially equivalent to $\bar{K}(w-v+1,s)$ and $d(v,w)=d(v)+\ldots+d(w)+1=(w-v+1)(s-1)+1$. Thus, finally, we need to prove that  
$$
\tilde{H}^{n(s-1)+1}(\bar{K}(n,s))=0
$$
and 
$$
\tilde{H}^{n(s-1)}(\bar{K}(n,s))=0,
$$
where we set $n=w-v+1$.

The simplicial complex $\bar{K}(n,s)$ has $(s+1)n$ vertices and by Theorem~\ref{hoch} the rank of the cohomology group $\tilde{H}^{n(s-1)+1}(\bar{K}(n,s))$ is equal to the algebraic Betti number  
$$
\beta^{-i,2j}(\bar{K}(n,s))=\beta^{-(2n-2),2n(s+1)}(\bar{K}(n,s)),
$$
where $j=(s+1)n$ and $j-i-1=n(s-1)+1$. Similarly, the rank of the cohomology group $\tilde{H}^{n(s-1)}(\bar{K}(n,s))$ is equal to the algebraic Betti number  
$$
\beta^{-i,2j}(\bar{K}(n,s))=\beta^{-(2n-1),2n(s+1)}(\bar{K}(n,s)).
$$

Then due to a multigraded version of~\cite[Corollary 7.9]{Ay} and Theorem~\ref{hoch} we get that 
$$
\beta^{-(2n-2),2n(s+1)}(\bar{K}(n,s))=\beta^{-(2n-2),2(s,\ldots,s,1,\ldots,1)}(\bar{K}(n,s))
$$ 
is equal to 
$$
\beta^{-(2n-2),2\cdot{2n}}(\bar{K}(n))=\rk\tilde{H}^{1}(\bar{K}(n))=0,
$$
since $\bar{K}(n)$ is a contractible chain of simplices of dimension $n-1$, glued along their facets, one by one. Similarly, one has that 
$$
\beta^{-(2n-1),2n(s+1)}(\bar{K}(n,s))=\beta^{-(2n-1),2(s,\ldots,s,1,\ldots,1)}(\bar{K}(n,s))
$$ 
is equal to 
$$
\beta^{-(2n-1),2\cdot{2n}}(\bar{K}(n))=\rk\tilde{H}^{0}(\bar{K}(n))=0,
$$ 
since $\bar{K}(n)$ is connected. This observation finishes the proof of that the Massey $n$-product $\langle\alpha_{1},\ldots,\alpha_{n}\rangle$ is defined and strictly defined in $H^*(\zp)$ for $P=P(n,s)$ and $s\geq 1$. 

Consider a representative $u_{j1}v_{j2}\ldots v_{js}v_{n+j}\in R^*(K_P)$ for $\alpha_{j}\in H^*(\zp)$.
To finish the proof of our theorem we must find the unique nonzero element in $\langle\alpha_{1},\ldots,\alpha_{n}\rangle$. A straightforward calculation shows that these representatives produce a defining system $C$ with 
$$
a(C)=v_{11}\ldots v_{1s}v_{2n}v_{22}\ldots v_{2s}\ldots v_{n2}\ldots v_{ns}u_{21}\ldots u_{n1}u_{n+1}\ldots u_{2n-1}
$$ 
(up to sign) being a product of two cocycles:
$$
v_{2n}v_{22}\ldots v_{2s}\ldots v_{n2}\ldots v_{ns}u_{21}\ldots u_{n1}
$$
and
$$
v_{11}\ldots v_{1s}u_{n+1}\ldots u_{2n-1},
$$ 
(up to signs), which gives a nonzero (and decomposable) element $[a(C)]\in H^*(\zp)$. 
\end{proof}

\begin{rema}
Note that by Definition~\ref{deftrunc} $K_{P(n,s)}=K_{P(n)}(J_{ns})$. Thus, if
$(\mathbb{D},\mathbb{S})=(\b{D}^{2j_{i,n,s}},\b{S}^{2j_{i,n,s}-1})$ ($1\leq i\leq m(n,s)$), then, due to Theorem~\ref{homeoBBCG}, one has: $\mathcal Z_{P(n,s)}\cong (\mathbb{D},\mathbb{S})^{K_{P(n)}}$. Therefore, Theorem~\ref{mainMassey} implies that there exists a family of generalized moment-angle manifolds over 2-truncated cubes $P(n)$ having strictly defined nontrivial $n$-fold Massey products in cohomology for any $n\geq 2$.
\end{rema}

\section{Applications}

In this section we are going to introduce applications of our main results to the theory of Stanley--Reisner rings with nontrivial Massey products in their Koszul homology and to rational homotopy theory of polyhedral products.

We start with a discussion of higher nontrivial Massey products in Tor-algebras of the type $\Tor^{S}_{*}(R,\ko)$, where $S$ is a polynomial ring and $R=S/I$ is a monomial ring. This area of homological algebra dates back to the pioneering work of Golod~\cite{Go}, in which it was proved that a local (Noetherian commutative) ring $R$ is \emph{Golod}, that is, the product and all the higher Massey products in $\Tor^{S}_{*}(R,\ko)$ are trivial ($\ko=R/m$), if and only if the Poincar\'e series of $R$ is given by a rational function of a certain type. In the case of a face ring $\ko[K]$ and its Tor-algebra $\Tor_{\ko[v_1,\ldots,v_m]}^{*,*}(\ko[K],\ko)$ the analogous result was proved by Grbi\'c and Theriault~\cite[Theorem 11.1]{GT}. 

As a counterexample to a claim by Berglund and J\"ollenbeck, Katth\"an~\cite{Kat} constructed a simplicial complex $K$ such that all products (of elements of positive degrees) are trivial in $\Tor_{\ko[v_1,\ldots,v_m]}^{*,*}(\ko[K],\ko)$, but $\ko[K]$ is not Golod having a nontrivial triple Massey product in its Koszul homology $\Tor_{\ko[v_1,\ldots,v_m]}^{*,*}(\ko[K],\ko)$. 

Furthermore, it was also shown in~\cite[Theorem 4.1]{Kat} that a monomial ring $R$ is Golod if all $r$-fold Massey products vanish for all $r=\max(2,\reg(R)-2)$. On the other hand, Frankhuizen~\cite{Frank} has recently proved that for a monomial ring $R$ whose minimal free resolution is rooted, $R$ is Golod if and only if the product on $\Tor^{S}_{*}(R,\ko)$ vanishes.

We show that the next result holds for the higher nontrivial Massey products in $\Tor_{\ko[v_1,\ldots,v_m]}^{*,*}(\ko[K],\ko)$.

\begin{theo}\label{coromainMassey}
Cohomology algebra $H^*(\zp;\ko)\cong\Tor_{\ko[v_1,\ldots,v_m]}^{*,*}(\ko[K],\ko)$ for $P=P(n,s), K=K_{P(n,s)}, m=d(J_{ns}), s\geq 1$ contains a strictly defined nontrivial $r$-fold Massey product for any $2\leq r\leq n$.\\
Moreover, the same holds for $\Tor_{\ko[v_1,\ldots,v_m]}^{*,*}(\ko[K],\ko)$, where $K=K(n)(J)$ is any simplicial multiwedge over $K(n)$, $m=d(J)$.
\end{theo}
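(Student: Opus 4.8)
The plan is to reduce the statement entirely to Theorem~\ref{mainMassey}, whose proof already constructs the relevant Massey products for the specific polytope $P=P(n,s)$, and to handle the concluding generalization via the $J$-construction invariance of multigraded Betti numbers. First I would observe that by Theorem~\ref{zkcoh} the cohomology algebra $H^*(\zp;\ko)$ and the Tor-algebra $\Tor_{\ko[v_1,\ldots,v_m]}^{*,*}(\ko[K],\ko)$ are isomorphic as bigraded $\ko$-algebras, and that this isomorphism is multiplicative and also carries the higher Massey product structure (both being computed from the same dga $R(K)$ up to quasi-isomorphism, as noted after Theorem~\ref{homeoBBCG}). Hence a strictly defined nontrivial $r$-fold Massey product in $H^*(\zp;\ko)$ transports verbatim to one in the Koszul homology $\Tor_{\ko[v_1,\ldots,v_m]}^{*,*}(\ko[K],\ko)$.

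Next I would extract the $r$-fold products for every $2\le r\le n$, not merely the top-order $n$-fold one. In the proof of Theorem~\ref{mainMassey} the classes $\alpha_j\in\tilde H^{s-1}(K_{I_j})$, $I_j=(j1,\ldots,js,n+j)$, are shown to satisfy conditions (1) and (2b) of Lemma~\ref{strictMassey} \emph{for every consecutive block}, precisely because every relevant induced subcomplex $\bar K_{I_v\sqcup\cdots\sqcup I_w}$ is combinatorially equivalent to $\bar K(w-v+1,s)$ and the two vanishing cohomology groups were verified for all $n=w-v+1$. Therefore, for any $2\le r\le n$, the consecutive sub-tuple $(\alpha_1,\ldots,\alpha_r)$ again satisfies the hypotheses of Lemma~\ref{strictMassey}, yielding a strictly defined $r$-fold product; its nontriviality follows by the same decomposable-cocycle computation of $a(C)$ (truncated to the first $r$ blocks). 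This gives the first assertion.

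For the second assertion, about an arbitrary multiwedge $K=K(n)(J)$ with $J=(j_1,\ldots,j_{m})$ and $m=d(J)$, the key point is that the multigraded Betti numbers we need to vanish are invariant under the $J$-construction in the sense used in the main proof. Concretely, by the multigraded version of~\cite[Corollary 7.9]{Ay} together with Theorem~\ref{hoch} and Theorem~\ref{mgrad}, the relevant bigraded Betti numbers of $K(n)(J)$ reduce to those of $K(n)$ itself, which were already computed to vanish ($\tilde H^1(\bar K(n))=\tilde H^0(\bar K(n))=0$, since $\bar K(n)$ is a connected contractible chain of simplices). So for any $J$ the analogues of conditions (1) and (2b) of Lemma~\ref{strictMassey} hold for the induced images of the classes $\alpha_1,\ldots,\alpha_n$, and the strictly defined nontrivial $r$-fold products persist. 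I expect the main obstacle to be verifying that the Massey-product structure—and not merely the additive or multiplicative structure—is genuinely preserved under the isomorphism with the Tor-algebra and under passage to the multiwedge; this requires invoking the dga quasi-isomorphism $R(K)\simeq C^*(\zk)$ carefully, since Massey products are invariants of the quasi-isomorphism type of the dga rather than of the cohomology algebra alone.
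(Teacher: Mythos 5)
There is a genuine gap in your choice of sub-tuple. For $2\le r\le n-1$ you take the consecutive classes $(\alpha_1,\ldots,\alpha_r)$ and claim nontriviality "by the same decomposable-cocycle computation truncated to the first $r$ blocks." But the whole point of the construction of $K(n)$ versus $\bar K(n)$ is that every \emph{proper consecutive} block of supports induces the contractible complex: as verified in the proof of Theorem~\ref{mainMassey}, $K_{I_v\sqcup\cdots\sqcup I_w}\simeq\bar K(w-v+1,s)$ whenever $w-v\le n-2$, and $\tilde H^{(w-v+1)(s-1)+1}(\bar K(w-v+1,s))=0$. Since a defined $r$-fold product $\langle\alpha_1,\ldots,\alpha_r\rangle$ lies in $\tilde H^{r(s-1)+1}(K_{I_1\sqcup\cdots\sqcup I_r})=\tilde H^{r(s-1)+1}(\bar K(r,s))=0$, your product is strictly defined but \emph{trivial}; the truncated cocycle $a(C)$ is necessarily a coboundary. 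This is not an accident but exactly the mechanism that makes the length-$n$ product defined in the first place (all consecutive subproducts must vanish), so no consecutive truncation can ever be nontrivial.

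The paper's proof avoids this by choosing the non-consecutive tuple $(\alpha_1,\ldots,\alpha_{r-1},\alpha_n)$. The distinction between $\MF(K(n))$ and $\MF(\bar K(n))$ is the single pair $(1,2n)$, which is an edge of $K(n)$; because the chosen supports include both the first block and the last block, the induced subcomplex $K_{I_1\sqcup\cdots\sqcup I_{r-1}\sqcup I_n}$ is combinatorially equivalent to $K(r,s)$ rather than $\bar K(r,s)$, and $\tilde H^{r(s-1)+1}(K(r,s))\ne 0$ is precisely where the nonzero decomposable class $[a(C)]$ of Theorem~\ref{mainMassey} lives. With that substitution the rest of your argument goes through: your reduction of the Tor-algebra statement to $H^*(\zp)$ via the common dga $R(K)$ is correct, and your treatment of the general multiwedge $K(n)(J)$ via the multigraded version of~\cite[Corollary 7.9]{Ay} matches the paper's intent.
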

\begin{proof}
Observe that for the simplicial complex $K=K(n,s)$ the following combinatorial equivalence holds
$$
K_{I_{1}\sqcup\ldots\sqcup I_{r-1}\sqcup I_{n}}\simeq K(r,s)
$$
for all $2\leq r\leq n$, which shows that, in the notation of Theorem~\ref{mainMassey}, the $r$-fold Massey product $\langle\alpha_{1},\ldots,\alpha_{r-1},\alpha_{n}\rangle$ is strictly defined and nontrivial in $H^*(\zk;\ko)$ by Theorem~\ref{mainMassey}.
The rest follows from the above observation applying the same argument as in the proof of Theorem~\ref{mainMassey}.
\end{proof}

\begin{rema}
For $s=1,r=n$ Theorem~\ref{coromainMassey} implies that the $n$-fold Massey product of 3-dimensional cohomology classes in~\cite[Theorem 4.5]{L1} is not only defined and nontrivial but also strictly defined, containing the unique element $[v_{1}v_{2n}u_{2}\ldots u_{2n-1}]$ (up to sign). 
\end{rema}

\begin{prop}\label{MasseyDegrees}
Given a set $\{k_{i}\geq 3|\,1\leq i\leq n\}$ of odd integers, there exists a polytopal triangulated sphere $K=K_P$ such that a strictly defined nontrivial Massey product $\langle\alpha_{1},\ldots,\alpha_{n}\rangle$ with $\dim\alpha_{i}=k_{i},1\leq i\leq n$ is defined in $H^*(\zp;\ko)\cong\Tor_{\ko[v_1,\ldots,v_m]}^{*,*}(\ko[K],\ko)$.
\end{prop}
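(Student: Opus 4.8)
The plan is to reduce the prescribed-degree statement to a single application of Theorem~\ref{coromainMassey} by a \emph{doubling/multiwedge calibration} of the building blocks used in Theorem~\ref{mainMassey}. Recall from the proof of Theorem~\ref{mainMassey} that for the polytope $P=P(n,s)$ the distinguished minimal non-faces $I_j=(j1,\ldots,js,n+j)$, $1\le j\le n$, produce classes $\alpha_j\in\tilde{H}^{s-1}(K_{I_j})$ of uniform degree $m(j)=2s+1$ in $H^*(\zp)$. The key flexibility I intend to exploit is that the $J$-construction acts \emph{independently on disjoint blocks of vertices}: by Theorem~\ref{mgrad}, the multigraded Tor-components only see the induced subcomplexes on the relevant vertex blocks, and (as in the displayed Betti-number comparison in the proof of Theorem~\ref{mainMassey}) applying a $J$-vector that wedges the $j$-th block $s_j$ times replaces $m(j)=2s+1$ by $m(j)=2s_j+1$ while leaving the combinatorial type of each induced subcomplex on a union of blocks in the required chain-of-simplices form $\bar{K}(\cdot,\cdot)$ essentially unchanged.

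First I would set $s_i=(k_i-1)/2$, which is a positive integer since each $k_i\ge 3$ is odd, so that a degree-$(2s_i+1)$ class realizes the prescribed $\dim\alpha_i=k_i$. Then I would define the simplicial complex $K=K(n)(J)$ for the non-uniform multiwedge vector $J=(s_1,\ldots,s_n,1,\ldots,1,j_{2n+1},\ldots,j_m)$, choosing the tail entries $j_p$ ($p>2n$) exactly as in Definition~\ref{deftrunc} so that $K_P=K(n)(J)$ remains a polytopal triangulated sphere (this is guaranteed by the Bahri--Bendersky--Cohen--Gitler result together with Chen--Peng, cited after Construction~\ref{simpmultwedge}, which ensures the $J$-construction of a polytopal sphere is again a polytopal sphere). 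The distinguished classes $\alpha_i$ are the top cohomology classes of the induced subcomplexes on the correspondingly wedged blocks $I_i=(i1,\ldots,is_i,n+i)$, so $\alpha_i\in\tilde{H}^{s_i-1}(K_{I_i})$ and hence $\dim\alpha_i=2s_i+1=k_i$ in $H^*(\zp;\ko)$.

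The verification that $\langle\alpha_1,\ldots,\alpha_n\rangle$ is strictly defined and nontrivial then proceeds by checking conditions (1) and (2b) of Lemma~\ref{strictMassey}, exactly as in Theorem~\ref{mainMassey}, but now with block-dependent degrees. The induced subcomplex $\bar{K}_{I_v\sqcup\cdots\sqcup I_w}$ is again a contractible chain of simplices glued facet-to-facet along a connected base, so both $\tilde{H}^{d(v,w)}$ and $\tilde{H}^{d(v,w)-1}$ vanish; the multigraded Betti-number comparison via the multigraded version of \cite[Corollary 7.9]{Ay} and Theorem~\ref{hoch} reduces each relevant group to $\tilde{H}^1(\bar{K}(w-v+1))=0$ and $\tilde{H}^0(\bar{K}(w-v+1))=0$, independently of the wedge multiplicities, since wedging preserves contractibility and connectedness of these chains. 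The nontriviality of the unique element follows from the same factorization-into-two-cocycles argument as at the end of the proof of Theorem~\ref{mainMassey}, with the monomial representatives $u_{i1}v_{i2}\ldots v_{is_i}v_{n+i}$ adjusted blockwise.

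The main obstacle I anticipate is purely bookkeeping rather than conceptual: one must confirm that the degree shift $d(v,w)=\sum_{i=v}^{w}(s_i-1)+1$ still lands the relevant cohomology groups in the range where the chain-of-simplices structure forces vanishing, now that the $s_i$ vary with $i$. Because Lemma~\ref{strictMassey} was stated and proved for arbitrary (possibly non-uniform) degree classes $\alpha_j\in\tilde{H}^{d(j)}(K_{I_j})$, no genuinely new indeterminacy argument is needed; the only care required is that the non-uniform multiwedge still yields, on each union of consecutive blocks, a subcomplex combinatorially equivalent to a (non-uniformly wedged) $\bar{K}$ that remains contractible. Once this geometric picture is confirmed — that wedging individual vertices of a facet-glued chain of simplices preserves its contractibility — the remaining computation is a direct transcription of the $P(n,s)$ argument, and the conclusion is immediate.
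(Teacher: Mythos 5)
Your proposal is correct and follows essentially the same route as the paper: set $d_i=(k_i-1)/2$, apply the non-uniform multiwedge $J=(d_1,\ldots,d_n,1,\ldots,1)$ to $P(n)$, take the top classes of the induced subcomplexes on the wedged blocks, and verify conditions (1) and (2b) of Lemma~\ref{strictMassey} by reducing the relevant multigraded Betti numbers via \cite[Corollary 7.9]{Ay} and Theorem~\ref{hoch} to $\rk\tilde{H}^1(\bar{K}(n))=\rk\tilde{H}^0(\bar{K}(n))=0$. The nontriviality argument by exhibiting the product of two cocycles is likewise the one the paper invokes from Theorem~\ref{mainMassey}.
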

\begin{proof}
Let $k_{i}=2d_{i}+1$, where $d_{i}\geq 1$, and consider an $m$-tuple of positive integers $J=(d_{1},\ldots,d_{n},1,\ldots,1)$. Let $P=P(n)(J)$, $K=K_P$ and $\alpha_{i}$ be a generator of the cohomology group $\tilde{H}^{d_{i}-1}(K_{\{i1,\ldots,id_{i},n+i\}};\ko)\cong\tilde{H}^{d_{i}-1}(S^{d_{i}-1};\ko)$. Analogously to the proof of Theorem~\ref{mainMassey}, the conditions (1) and (2b) of Lemma~\ref{strictMassey} are valid for $(\alpha_{1},\ldots,\alpha_{n})$, since 
by the multigraded version of~\cite[Corollary 7.9]{Ay} one has ($j=\sum\limits_{i=1}^{n}d_{i}+n$ and $j-i-1=d(1,n)=\sum\limits_{i=1}^{n}d_{i}-n+1$ for condition (2b), or $j-i-1=d(1,n)-1=\sum\limits_{i=1}^{n}d_{i}-n$ for condition (1); we drop $\ko$ from the notation in what follows):
$$
\beta^{-i,2j}(\bar{K}(n)(J))=\beta^{-(2n-2),2(d_{1},\ldots,d_{n},1\ldots,1)}(\bar{K}(n)(J))=\beta^{-(2n-2),2\cdot 2n}(\bar{K}(n)),
$$
the latter equals to $\rk\tilde{H}^1(\bar{K}(n))=0$ (for condition (2b)), and 
$$
\beta^{-i,2j}(\bar{K}(n)(J))=\beta^{-(2n-1),2(d_{1},\ldots,d_{n},1\ldots,1)}(\bar{K}(n)(J))=\beta^{-(2n-1),2\cdot 2n}(\bar{K}(n)),
$$
the latter equals to $\rk\tilde{H}^0(\bar{K}(n))=0$ (for condition (1)). The rest of the proof goes similarly to that of Theorem~\ref{mainMassey}.
\end{proof}

Next we are going to introduce the applications of our results on nontrivial higher Massey products to studying (rational) formality of (generalized) moment-angle manifolds. For more details on rational homotopy theory we refer the reader to the monographs~\cite{f-h-t01, f-o-t08}. The applications of rational homotopy theory in toric topology can be found in~\cite{BBCG14, DS, F-T}. 

We now turn to the definition of formal differential graded algebras and (rationally) formal topological spaces.

\begin{defi}\label{defFormal}
(1) A differential graded algebra $[A,d]$ is called \emph{formal} if there exists a chain of quasi-isomorphisms (i.e., a weak equivalence in CDGA category) between $[A,d]$ and its cohomology algebra with a trivial differential $[H^{*}(A),0]$.
(2) A space $X$ is called \emph{(rationally) formal} if its Sullivan-de Rham algebra of PL-differential forms $A_{PL}(X;\mathbb{Q})$ (the Sullivan minimal model) with the de Rham differential $d$ is formal in the sense of (1), that is, if it is weakly equivalent in the CDGA category to the singular cohomology algebra $H^{*}(X;\mathbb{Q})$ of $X$ with a trivial differential.
\end{defi}

\begin{rema}
Sullivan~\cite{sull77} proved that formality of $X$ implies that all (ordinary) higher Massey products in its cohomology algebra $H^*(X)$ are vanishing. However, the opposite statement does not hold in general.
\end{rema}

\begin{exam}
Among the examples of formal spaces are:
\begin{itemize}
\item[(1)] spheres; H-spaces and, in particular, Eilenberg-MacLane spaces $\K(\pi,n)$ for $n>1$; symmetric spaces;
\item[(2)] compact connected Lie groups $G$ and their classifying spaces $BG$ ~\cite{sull77};
\item[(3)] compact K\"ahler manifolds~\cite{D-G-M-S} and, in particular, projective toric varieties;
\item[(4)] quasitoric manifolds~\cite{P-R};
\item[(5)] polyhedral products of the type $X^{K}:=(X,*)^K$, see~\cite[Theorem 8.1.2]{TT}. 
\end{itemize}
The last case includes the following particular examples:
\begin{itemize}
\item[(a)] Davis-Januszkiewicz spaces $\Dj(K)=(\mathbb{C}P^{\infty})^K$ are formal, see~\cite{no-ra05};
\item[(b)] Suppose $K=\Delta(\Gamma)$ is a clique complex for a simple graph $\Gamma$. Then the Eilenberg-MacLane space $\K(RA_{\Gamma},1)=(S^1)^{K}$ for the right-angled Artin group $RA_{\Gamma}$ is formal, see~\cite{P-S};
\item[(c)] Similarly, the Eilenberg-MacLane space $\K(RC_{\Gamma},1)=(\mathbb{R}P^{\infty})^{K}$, for the right-angled Coxeter group $RC_{\Gamma}$ is formal. 
\end{itemize}
Moreover, formality is preserved under products and wedges of spaces. A connected sum of formal manifolds is also formal.
\end{exam}

Baskakov~\cite{BaskM} constructed the first example of a polyhedral product, which is not formal. Namely, he introduced a family of moment-angle-complexes $\zk$ over triangulated spheres $K$ having a nontrivial triple Massey product in $H^*(\zk)$. Note that by Theorem~\ref{zkcoh} for any triangulated sphere $K$ its moment-angle-complex $\zk$ is 2-connected and when $K$ is flag, the third Betti number is equal to the number of minimal non-faces in $K$: $b^{3}(\zk)=\beta^{-1,4}(K)=|\MF(K)|$. 

In what follows we shall give a partial solution of the following open problem, see~\cite[Problem 8.28]{BP04}:

\begin{prob}\label{probFormal}
Determine a class of simplicial complexes $K$ (simple polytopes $P$), for which a differential graded algebra $R^*(K)$ (a moment-angle-complex $\zk$, a moment-angle manifold $\zp$) is (rationally) formal. 
\end{prob}

We are going to state the answer to the above problem when $P$ is a graph-associahedron. To do this, we first recall a definition of this class of flag simple polytopes.

\begin{defi}\label{nest}
A {\emph{building set}} on $[n+1]=\{1,2,\ldots,n+1\}$ for $n\geq 2$ is a family of nonempty subsets $B=\{S\subseteq [n+1]\}$, such that: 1) $\{i\}\in B$ for all $1\leq i\leq n+1$, 2) if $S_1\cap S_2\ne\varnothing$, then $S_1\cup S_2\in B$. A building set is called {\emph{connected}} if $[n+1]\in B$.

For any building set $B$ on $[n+1]$ we get an $n$-dimensional simple polytope $P_B$ called a \emph{nestohedron (on a building set $B$)} as a Minkowski sum of simplices:
$$
P_{B}=\sum\limits_{S\in B}\Delta_{S},\quad\Delta_{S}=\conv\{{e}_j|\,j\in S\}\subset \mathbb R^{n+1}.
$$ 
Note that facets of $P_{B}$ are in 1-1 correspondence with non-maximal elements $S$ in $B$ (\cite{FS},\cite[Proposition 1.5.11]{TT}).
\end{defi}

\begin{exam}\label{Bcube,simplex}
For a combinatorial $n$-simplex $P$ the subset of $2^{[n+1]}$ consisting of all the singletons $\{i\},1\leq i\leq n+1$ and the whole set $[n+1]$ is a connected building set $B$ such that $P=P_{B}$ when $n\geq 2$. \\
For a combinatorial $n$-cube $P$ the subset $B$ of $2^{[n+1]}$ consisting of
$$
\{1\},\ldots,\{n+1\},\{1,2\},\{1,2,3\},\ldots,[n+1]
$$
is a connected building set such that $P=P_B$ when $n\geq 2$.
\end{exam}

Due to the result of Buchstaber and Volodin~\cite[Proposition 6.1, Theorem 6.5]{bu-vo11}, a nestohedron is flag if and only if it is a \emph{2-truncated cube}, i.e. it can be obtained as a result of a sequence of codimension 2 face cuttings by hyperplanes of general position, starting with a cube. The next family of polytopes introduced by Carr and Devadoss~\cite{CD} consists of flag nestohedra and, therefore, can be represented as 2-truncated cubes.  

\begin{defi}\label{defiGA}
Suppose $\Gamma$ is a simple graph on the vertex set $[n+1]$. \emph{A graphical building set} $B(\Gamma)$ on $[n+1]$ consists of such $S$ that the induced subgraph $\Gamma_{S}$ on the vertex set $S\subset [n+1]$ is a connected graph.\\
The resulting nestohedron $P_{\Gamma}=P_{B(\Gamma)}$ is called a \emph{graph-associahedron}.
\end{defi}

Among graph-associahedra, there are such polytopes arising in different areas of Mathematics as permutohedra $\Pe^n$ ($\Gamma$ is a complete graph), associahedra $\As^n$ (or, Stasheff polytopes~\cite{S}; $\Gamma$ is a path graph), stellahedra $\St^n$ ($\Gamma$ is a stellar graph), and cyclohedra $\Cy^n$ (or, Bott-Taubes polytopes~\cite{BT}; $\Gamma$ is a cycle graph). 

Now we are going to give a solution of the Problem~\ref{probFormal} in the class of graph-associahedra and all the polytopes that can be obtained from graph-associahedra using $J$-construction. Namely, we shall prove that certain conditions on the combinatorics of $\Gamma$ are necessary and sufficient for the moment-angle manifold $\zp$ of such a polytope $P$ to be (rationally) formal. In order to prove this result, we need the next statement.

\begin{lemm}\label{MasseyGA}
Suppose $P=P_{\Gamma}$ is a graph-associahedron.
Then the following two statements hold.
\begin{itemize}
\item[(1)] There exists a nontrivial strictly defined triple Massey product $\langle\alpha_{1},\alpha_{2},\alpha_{3}\rangle$ of 3-dimensional cohomology classes in $H^{*}(\zp)$ if and only if there is a connected component of $\Gamma$ on $m\geq 4$ vertices, which is different from a complete graph $K_{4}$;
\item[(2)] If $P=\Pe^3$ then there exists a nontrivial strictly defined triple Massey product $\langle\alpha_{1},\alpha_{2},\alpha_{3}\rangle$ in $H^*(\zp)$ with $\dim\alpha_{1}=\dim\alpha_{3}=5$, $\dim\alpha_{2}=3$.
\end{itemize} 
\end{lemm}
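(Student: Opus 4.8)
The plan is to reduce both parts to the combinatorics of $G:=\sk^1(K_P)$ and then to the combinatorics of $\Gamma$ itself. Since every graph-associahedron is flag (a $2$-truncated cube by~\cite{bu-vo11}), we have $K_P=\Delta(G)$ and the minimal non-faces of $K_P$ are exactly the missing edges of $G$. By Theorem~\ref{zkcoh} a $3$-dimensional class of $H^*(\zp)$ lies in $\widetilde H^{2-|I|}((K_P)_I)$, which is nonzero only for $|I|=2$ with $(K_P)_I$ disconnected; hence $3$-dimensional classes correspond bijectively to missing edges of $G$, i.e. to pairs of facets of $P$ with empty intersection. Through the nested-set description of $P_\Gamma$, two tubes $S,S'$ fail to intersect precisely when $\{S,S'\}$ is not nested, that is, when $S,S'$ overlap, or are disjoint with $S\cup S'$ again a tube.

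For part (1) I would first invoke~\cite[Theorem 6.1.1]{DS}: a nontrivial triple Massey product of $3$-dimensional classes exists in $H^*(\zp)$ iff $G$ contains one of the five obstruction graphs as a full subgraph, and by Example~\ref{examLemma}(2) together with Lemma~\ref{strictMassey} any such product is automatically strictly defined. Thus part (1) becomes the combinatorial statement that $G$ contains an obstruction graph iff $\Gamma$ has a connected component on $\ge 4$ vertices different from $K_4$. Next I would localize to a single component: for $\Gamma=\Gamma_1\sqcup\Gamma_2$ one has $P_\Gamma=P_{\Gamma_1}\times P_{\Gamma_2}$, so $K_P=K_{P_{\Gamma_1}}*K_{P_{\Gamma_2}}$ and $\zp=\mathcal Z_{P_{\Gamma_1}}\times\mathcal Z_{P_{\Gamma_2}}$; any two tubes from different components are nested-compatible, so the induced complex on two missing edges lying in different components is a join $C_4$ and their cup product is a nonzero cross class. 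Hence consecutive classes of a \emph{defined} triple product must share a component, forcing all three missing edges, and the obstruction graph they span, into one connected $\Gamma$. For the remaining \emph{if} direction I would split: if connected $\Gamma$ on $\ge 4$ vertices is not complete, a non-edge of $\Gamma$ together with a shortest path joining its endpoints yields three disjoint missing edges whose links $(K_P)_{I_1\sqcup I_2}$, $(K_P)_{I_2\sqcup I_3}$ are connected, modeled on $\As^3$ where $I_1=(\{1\},\{2\})$, $I_2=(\{3\},\{4\})$, $I_3=(\{1,2,3\},\{2,3,4\})$ already work; if $\Gamma=K_m$ with $m\ge 5$, the singletons are pairwise incompatible, so the links must be made connected through genuine nestings, and one exhibits three disjoint incomparable pairs of tubes of mixed sizes (e.g. built from $\{1\},\{2\},\{1,2\},\{1,3\}$ and analogues, using the extra vertices of $K_5$) whose consecutive links are trees while the total induced complex has $\widetilde H^1\ne 0$.

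The heart of the lemma is the \emph{only if} direction, in particular the exceptional case $\Gamma=K_4$. One must show $G$ contains none of the five obstruction graphs whenever every component of $\Gamma$ has at most $3$ vertices or equals $K_4$. Components on $\le 3$ vertices give at most $5$ facets (nerves of $\As^2$ or of the hexagon $\Pe^2$), too few to carry three pairwise-disjoint missing edges with connected links, and for the hexagon the triple product has nontrivial indeterminacy exactly as in Example~\ref{examLemma}(1). For $\Gamma=K_4$, i.e. $P=\Pe^3$, the skeleton $G$ is the comparability graph of $2^{[4]}\setminus\{\varnothing,[4]\}$ (two tubes are adjacent iff comparable), and I would verify that no choice of three pairwise-disjoint incomparable pairs of tubes realizes an obstruction graph: for each such choice at least one requirement fails, namely either a consecutive link $(K_P)_{I_i\sqcup I_{i+1}}$ is disconnected (condition (1) of Lemma~\ref{strictMassey} fails, producing indeterminacy), or it is a $4$-cycle (forcing the consecutive cup product to be nonzero, so the product is undefined), or the full induced complex has $\widetilde H^1=0$ (so the product is trivial). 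This finite but intricate casework, reflecting the rigidity of the Boolean comparability graph, is where the main difficulty lies.

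For part (2), since $\Pe^3$ carries no $3$-dimensional triple product, I would produce one in higher degrees directly. By Theorem~\ref{zkcoh} a degree-$5$ class for a flag complex comes from $\widetilde H^{0}((K_P)_I)$ with $|I|=4$, i.e. from four tubes inducing a disconnected subgraph of $G$. The plan is to exhibit explicit tubes $A$ and $C$ (four each, with $(K_P)_A$ and $(K_P)_C$ disconnected) giving classes $\alpha_1,\alpha_3$ of degree $5$, and a missing edge $B$ giving $\alpha_2$ of degree $3$, all pairwise disjoint, such that the links $(K_P)_{A\sqcup B}$ and $(K_P)_{B\sqcup C}$ are connected with vanishing $\widetilde H^1$ (so conditions (1) and (2b) of Lemma~\ref{strictMassey} hold, here $d(1,2)=d(2,3)=1$) while $\widetilde H^1((K_P)_{A\sqcup B\sqcup C})\ne 0$. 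Lemma~\ref{strictMassey} then makes $\langle\alpha_1,\alpha_2,\alpha_3\rangle$ strictly defined, and following the template in the proof of Theorem~\ref{mainMassey} one writes down a defining system whose cocycle $a(C)$ factors as a product of two explicit cocycles in $R^*(K_P)$, exhibiting the unique element as a nonzero decomposable class and pinning down $\dim\alpha_1=\dim\alpha_3=5$, $\dim\alpha_2=3$. The only delicate point is choosing the ten tubes so that all three cohomological conditions hold at once; the verification itself is a routine reduced-cohomology computation on induced subcomplexes with at most ten vertices.
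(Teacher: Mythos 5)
Your overall strategy matches the paper's: part (1) rests on the Denham--Suciu obstruction-graph criterion together with the observation (Example~\ref{examLemma}\,(2)) that such products are automatically strictly defined, and part (2) is an explicit configuration verified through the conditions of Lemma~\ref{strictMassey}. The paper, however, does not reprove the combinatorial classification in part (1): it simply cites~\cite[Proposition 4.2]{L1}. You propose to derive it from scratch, and that is where the first gap lies. The two decisive finite verifications --- that for $\Gamma=K_4$ (i.e.\ $P=\Pe^3$, a $14$-facet polytope) no triple of pairwise-disjoint incomparable pairs of tubes realizes an obstruction graph, and that for $\Gamma=K_m$, $m\ge 5$, some triple does --- are announced (``finite but intricate casework'', ``one exhibits three disjoint incomparable pairs \ldots and analogues'') but never carried out. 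These are precisely the nontrivial content of the only-if and if directions, so as written the argument is a plan rather than a proof. (A small slip along the way: a triangle component of $\Gamma$ gives the hexagon $\Pe^2$ with six facets, not five, and it \emph{does} carry three pairwise disjoint missing edges; the correct reason it contributes nothing is the disconnectedness of the consecutive links, as you note afterwards.)

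The second, more serious gap is in part (2). The entire content of that statement is the existence of a specific configuration, and you stop exactly at the point of producing it: ``the only delicate point is choosing the ten tubes so that all three cohomological conditions hold at once.'' The paper supplies the configuration explicitly --- in its facet labelling of $\Pe^3$, $J_1=\{2,3,9,12\}$, $J_2=\{1,14\}$, $J_3=\{5,7,8,13\}$ with representatives $v_{12}u_2u_3u_9$, $v_1u_{14}$, $v_5u_7u_8u_{13}$ --- and then checks that $K_{J_1\sqcup J_2}$ and $K_{J_2\sqcup J_3}$ are contractible (so the product is defined and the indeterminacy subspace vanishes) while the representative $v_5v_{12}u_1u_2u_3u_7u_8u_9u_{13}u_{14}$ of the product is a nonzero decomposable class. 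Until you name the tubes and perform that verification, part (2) is not established; your degree bookkeeping ($d(1,2)=d(2,3)=1$, classes of dimensions $5,3,5$ from $|I|=4,2,4$) is correct but is only the frame around the missing construction.
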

\begin{proof}
The first statement was proved in~\cite[Proposition 4.2]{L1}; it follows also from the proof of~\cite[Theorem 6.1.1]{DS} that a nontrivial triple Massey product of 3-dimensional classes in $H^*(\zk)$ is always strictly defined, cf. Example~\ref{examLemma}\,(2). 

To prove the second statement, consider the following induced subcomplexes in the nerve complex $K=K_P$:
$J_{1}$ on the vertex set $\{2,3,9,12\}$, $J_{2}$ on the vertex set $\{1,14\}$, and $J_{3}$ on the vertex set $\{5,7,8,13\}$, and the following cohomology classes: $\alpha=[v_{12}u_{2}u_{3}u_{9}]\in\tilde{H}^{0}(K_{J_1})\subset H^{5}(\zp)$, $\beta=[v_{1}u_{14}]\in\tilde{H}^{0}(K_{J_2})\subset H^{3}(\zp)$, and $\gamma=[v_{5}u_{7}u_{8}u_{13}]\in\tilde{H}^{0}(K_{J_3})\subset H^{5}(\zp)$. 
One can see easily that the induced subcomplexes $K_{J_{1}\sqcup J_{2}}$ and $K_{J_{2}\sqcup J_{3}}$ in $K$ are contractible, see Figure~\ref{permut}, in which the facets of $P$ that can be seen are marked with red numbers and those, that remain unseen, are marked with light blue ones. 

\begin{figure}[h]
\includegraphics[scale=0.8]{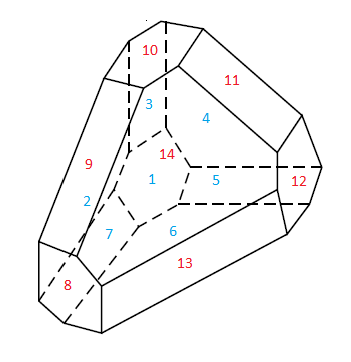}
\caption{3-dimensional permutohedron $P$.}
\label{permut}
\end{figure}

Thus, by Theorem~\ref{zkcoh}, $\alpha\cdot\beta\in\tilde{H}^{1}(K_{J_{1}\sqcup J_{2}})$ and $\beta\cdot\gamma\in\tilde{H}^{1}(K_{J_{2}\sqcup J_{3}})$ are trivial. Therefore, the triple Massey product $\langle\alpha,\beta,\gamma\rangle$ is defined and a straightforward calculation shows that (up to sign) a nonzero 12-dimensional cohomology class 
$$
[v_{5}v_{12}u_{1}u_{2}u_{3}u_{7}u_{8}u_{9}u_{13}u_{14}]
$$
belongs to $\in\langle\alpha,\beta,\gamma\rangle$.  
The indeterminacy is lying in the linear subspace 
$$
\alpha\cdot\tilde{H}^{0}(K_{J_{2}\sqcup J_{3}})+\gamma\cdot\tilde{H}^{0}(K_{J_{1}\sqcup J_{2}})=0,
$$
since both the induced subcomplexes $K_{J_{1}\sqcup J_{2}}$ and $K_{J_{2}\sqcup J_{3}}$ are connected. We conclude that the triple Massey product $\langle\alpha,\beta,\gamma\rangle$ is strictly defined and nontrivial.
\end{proof}

\begin{theo}\label{mainFormal}
Suppose $P=P_{\Gamma}$ is a graph-associahedron of dimension $n\geq 2$ with $m$ facets. Then the following statements are equivalent:
\begin{itemize}
\item[(a)] Each of the connected components of $\Gamma$ is either a vertex, or a segment, or a path on 3 vertices, or a cycle on 3 vertices;
\item[(b)] $P$ is a product of segments $I^{1}$, pentagons $P_5$, and hexagons $P_6$;
\item[(c)] There are no defined nontrivial triple Massey products in $H^*(\zp)\cong \Tor_{\mathbb{Z}[v_{1},\ldots,v_{m}]}(\mathbb{Z}[P],\mathbb{Z})$; 
\item[(d)] $\zp$ is a (rationally) formal space;
\item[(e)] $\mathcal Z_{P(J)}$ is a (rationally) formal space for any $m$-tuple of positive integers $J$.
\end{itemize}
A formal $\zp$ over a graph-associahedron $P$ is diffeomorphic to a product of a number of spheres $M_{1}=S^3$ and connected sums of products of spheres:
$$
M_{2}=(S^{3}\times S^{4})^{\#5}, M_{3}=(S^{3}\times S^{5})^{\#9}\#(S^{4}\times S^{4})^{\#8}.
$$ 
\end{theo}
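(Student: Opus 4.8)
The plan is to prove the equivalences by establishing the cycle $(a)\Leftrightarrow(b)\Rightarrow(e)\Rightarrow(d)\Rightarrow(c)\Rightarrow(a)$ and then to read off the diffeomorphism classification from~(b). For $(a)\Leftrightarrow(b)$ I would use that a graph-associahedron factors as $P_{\Gamma}=\prod_i P_{\Gamma_i}$ over the connected components $\Gamma_i$ of $\Gamma$, that $P_{\Gamma_i}$ is indecomposable as a Cartesian product of positive-dimensional polytopes exactly when $\Gamma_i$ is connected, and the uniqueness of the factorization of a simple polytope into indecomposables. Since $\dim P_{\Gamma_i}=|\Gamma_i|-1$, matching the factors $I^1,P_5,P_6$ (all indecomposable, of dimensions $1,2,2$) forces $|\Gamma_i|\le 3$; inspecting the connected graphs on at most three vertices then gives the dictionary: a vertex yields a point, a segment yields $I^1$, the path on three vertices yields the associahedron $\As^2=P_5$, and the triangle $C_3=K_4$-free complete graph on three vertices yields the cyclohedron $\Cy^2=\Pe^2=P_6$. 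This identifies (a) and (b).

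For $(b)\Rightarrow(e)$ I would use that the $J$-construction is compatible with the join decomposition $K_{P_1\times P_2}=K_{P_1}\ast K_{P_2}$ and that polyhedral products over a join are Cartesian products, so $\mathcal Z_{P(J)}$ is a product of generalized moment-angle manifolds over the factors $I^1,P_5,P_6$. For the segment $K_{I^1}=\partial\Delta^1$, and every $J$-construction of it is a boundary of a simplex, whose moment-angle-complex is an odd-dimensional sphere. For the two polygons, Theorem~\ref{homeoBBCG} identifies the relevant generalized moment-angle manifolds with $\mathcal R_{P_5(J)}$ and $\mathcal R_{P_6(J)}$, and I would invoke the fact that the moment-angle manifold of a polygon, and of the polytope produced from it by a $J$-construction, is a connected sum of products of spheres. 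As spheres and connected sums of products of spheres are formal and a product of formal spaces is formal, $\mathcal Z_{P(J)}$ is formal for every $J$, giving~(e); specializing to $J=(1,\dots,1)$ gives $(e)\Rightarrow(d)$, and $(d)\Rightarrow(c)$ is Sullivan's theorem that formality annihilates all higher Massey products.

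The implication $(c)\Rightarrow(a)$ I would prove contrapositively through Lemma~\ref{MasseyGA}. If (a) fails, some connected component $\Gamma_i$ has at least four vertices. When $\Gamma_i\ne K_4$, Lemma~\ref{MasseyGA}(1) supplies a nontrivial strictly defined triple Massey product of $3$-dimensional classes in $H^*(\mathcal Z_{P_{\Gamma_i}})$; when $\Gamma_i=K_4$, so that $P_{\Gamma_i}=\Pe^3$, Lemma~\ref{MasseyGA}(2) supplies a nontrivial triple Massey product of classes of degrees $5,3,5$. Writing $P_{\Gamma}=P_{\Gamma_i}\times Q$, the projection $\mathcal Z_{P_{\Gamma}}\to\mathcal Z_{P_{\Gamma_i}}$ induces a split monomorphism of cohomology rings, along which the Massey product pulls back to a nontrivial defined triple Massey product in $H^*(\mathcal Z_P)$, contradicting~(c). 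Finally, the diffeomorphism classification follows from~(b): $\mathcal Z_P$ is a Cartesian product of copies of $\mathcal Z_{I^1}=S^3=M_1$, of $\mathcal Z_{P_5}=(S^3\times S^4)^{\#5}=M_2$, and of $\mathcal Z_{P_6}=(S^3\times S^5)^{\#9}\#(S^4\times S^4)^{\#8}=M_3$, the diffeomorphism types of the polygon moment-angle manifolds being the known computation (consistent with $|\MF(K_{P_5})|=5$ and $|\MF(K_{P_6})|=9$, which fix the third Betti numbers).

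The main obstacle: the sharp point is the case $\Gamma_i=K_4$ of $(c)\Rightarrow(a)$, where no triple product of $3$-dimensional classes exists and one genuinely needs the mixed-degree product of Lemma~\ref{MasseyGA}(2), together with the verification that a nontrivial Massey product living in a cohomological tensor factor persists in the product. A second delicate ingredient is $(b)\Rightarrow(e)$: one must know that a $J$-construction applied to a polygon again yields a polytope whose moment-angle manifold is a connected sum of products of spheres, hence formal, which is stronger than the mere formality of $\mathcal Z_{P_5}$ and $\mathcal Z_{P_6}$ themselves.
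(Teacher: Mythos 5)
Your proposal is correct and follows essentially the same route as the paper: $(a)\Leftrightarrow(b)$ from the definition of graph-associahedra, Lemma~\ref{MasseyGA} for the non-formal direction, the McGavran/Bosio--Meersseman/Gitler--L\'opez de Medrano identifications of $\mathcal Z_{I^1}$, $\mathcal Z_{P_5}$, $\mathcal Z_{P_6}$ for formality and the diffeomorphism types, and the join-compatibility of the $J$-construction together with~\cite[Theorem 2.4]{G-LdM} for the equivalence with~(e). The only substantive difference is that you make explicit the pullback/split-monomorphism argument needed to propagate the mixed-degree Massey product of Lemma~\ref{MasseyGA}(2) from a $K_4$-component factor $\Pe^3$ to the full product $\zp$, a point the paper's proof leaves implicit.
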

\begin{proof}
The equivalence $(a)\Leftrightarrow(b)$ follows directly from the Definition~\ref{defiGA}. 

By Lemma~\ref{MasseyGA}, if $\Gamma$ has a connected component on $m\geq 4$ vertices, then the corresponding moment-angle manifold $\mathcal Z_{P_\Gamma}$ has a nontrivial strictly defined triple Massey product in cohomology, thus $\mathcal Z_{P_\Gamma}$ can not be formal. Otherwise, by (b) $P$ is combinatorially equivalent to a product of segments $I^1$, polygons $P_5$, and hexagons $P_6$. By the computation of diffeomorphism types of $\zp$ for polygons $P$, see~\cite{bo-me06,mcga79}, one has: $\mathcal Z_{I^1}=S^3$, $\mathcal Z_{P_5}=(S^{3}\times S^{4})^{\#5}$, $\mathcal Z_{P_6}=(S^{3}\times S^{5})^{\#9}\#(S^{4}\times S^{4})^{\#8}$, and $\zp$ is a product of the latter manifolds, since $\mathcal Z_{P_{1}\times P_{2}}$ is diffeomorphic to $\mathcal Z_{P_1}\times\mathcal Z_{P_2}$ for any simple polytopes $P_{1}$ and $P_{2}$, and $P_{\Gamma_{1}\sqcup\Gamma_{2}}=P_{\Gamma_1}\times P_{\Gamma_2}$ by Definition~\ref{nest} and Definition~\ref{defiGA}. Such a manifold $\zp$ has no nontrivial higher Massey products in cohomology, which proves the equivalence $(a)\Leftrightarrow (c)$, and, moreover, is (rationally) formal, which proves the equivalence $(a)\Leftrightarrow (d)$, as well as the final part of the theorem about diffeomorphism types of formal moment-angle manifolds $\zp$ over graph-associahedra $P=P_\Gamma$.

Obviously, (e) implies (d) (for $J=(1,\ldots,1)$ one has $P(J)=P$).
Suppose (d) holds. It follows from the Construction~\ref{simpmultwedge} that for any nonempty simplicial complexes $K_{1}$ on $[m_1]$ and $K_{2}$ on $[m_2]$ with no ghost vertices the simplicial multiwedge operation preserves joins: $(K_{1}*K_{2})(J)=K_{1}(J_1)*K_{2}(J_2)$, where $J=(J_{1},J_{2})$, $J_{1}$ is an $m_{1}$-tuple of positive integers and $J_{2}$ is an $m_{2}$-tuple of positive integers. Thus, applying the latter formula in the case of polytopal spheres $K_{1}=K_{P_1}$ and $K_{2}=K_{P_2}$, one obtains a combinatorial equivalence: $(P_{1}\times P_{2})(J)=P_{1}(J_{1})\times P_{2}(J_{2})$. By the equivalence proved above, (d) implies a formal $\zp$ over a graph-associahedron $P$ is a product of a number of manifolds diffeomorphic to $M_{1}$, $M_{2}$, and $M_{3}$.
Therefore, $\mathcal Z_{P(J)}$ is homeomorphic to a product of a number of manifolds homeomorphic to $\mathcal Z_{I^1(J)}$, $\mathcal Z_{P_5(J)}$, and $\mathcal Z_{P_6(J)}$. Note that $\mathcal Z_{I^1(J)}=S^{2|J|-1}$, since $I^1(J)=\Delta^{|J|-1}$. By~\cite[Theorem 2.4]{G-LdM} $\mathcal Z_{P_5(J)}$ and $\mathcal Z_{P_6(J)}$ are homeomorphic to connected sums of sphere products and thus are formal spaces. This proves the equivalence $(d)\Leftrightarrow (e)$ and finishes the proof of the whole statement.
\end{proof}

\begin{rema}
The equivalence of (a), (b), and (d) in Theorem~\ref{mainFormal} corrects the case of permutohedra in the statements of Theorem 3 and Proposition 4 in the short note~\cite{L2}. 
\end{rema}

Recall that for a simple polytope $P$ of dimension $n\geq 2$ a simple polytope $Q=\vc^{k}(P), k\geq 0$ is obtained from $P$ as a result of a sequence of vertex truncations by hyperplanes of general position w.r.t. $P$. Note that for $k\geq 1$ the combinatorial type of $Q$, in general, depends not only on that of $P$ and on the number $k$, but also on the choice of vertices that are truncated.

\begin{coro}
Suppose $\Gamma$ is a connected graph and $P=P_{\Gamma}$ is a graph-associahedron of dimension $n\geq 2$. Then the following statements are equivalent:
\begin{itemize}
\item[(a)] $\zp$ is formal;
\item[(b)] $\mathcal Z_{\vc^{k}(P)}$ is formal for any $k\geq 0$;
\item[(c)] $\mathcal Z_{\vc^{k}(P)}$ is formal for some $k\geq 0$.
\end{itemize} 
\end{coro}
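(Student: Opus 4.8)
The plan is to deduce everything from Theorem~\ref{mainFormal} together with the observation that a vertex truncation $\vc$ acts on the nerve complex $K_{P}=\partial P^{*}$ as a \emph{stellar subdivision at a facet}. Since $\vc^{0}(P)=P$, statement (a) is the case $k=0$ of both (b) and (c); hence (b)$\Rightarrow$(a) and (b)$\Rightarrow$(c) are trivial, and it remains to verify the cycle by proving (a)$\Rightarrow$(b) and (c)$\Rightarrow$(a), i.e.\ that for a connected $\Gamma$ the three statements always have the same truth value. By Theorem~\ref{mainFormal} (with $\Gamma$ connected and $n\ge 2$) the space $\zp$ is formal precisely when $\Gamma$ has three vertices, so $\Gamma$ is a path or a cycle on $3$ vertices and $P$ is a pentagon $P_{5}$ or a hexagon $P_{6}$; otherwise $\Gamma$ has at least four vertices and $\zp$ is nonformal. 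I treat the two cases separately.

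In the formal case $P$ is a polygon, and cutting a vertex of a polygon yields a polygon with one more edge, so every $\vc^{k}(P)$ is again a polygon (independently of the vertices chosen). By the classification of moment-angle manifolds over polygons~\cite{bo-me06,mcga79}, $\mathcal Z_{\vc^{k}(P)}$ is a connected sum of products of two spheres, hence formal. Thus (a), (b) and (c) all hold and the equivalence is immediate here.

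In the nonformal case $\Gamma$ is connected on $\ge 4$ vertices, so by Lemma~\ref{MasseyGA} the manifold $\zp$ carries a nontrivial strictly defined triple Massey product $\langle\alpha_{1},\alpha_{2},\alpha_{3}\rangle$ with $\alpha_{j}\in\widetilde{H}^{*}(K_{J_{j}})$ and $W=J_{1}\sqcup J_{2}\sqcup J_{3}$. By Theorem~\ref{zkcoh} this product is \emph{supported} on the induced subcomplex $K_{W}$: the classes $\alpha_{j}$ and every cochain of a defining system lie in summands indexed by subsets of $W$, so the same strictly defined nonzero product already lives in $H^{*}(\mathcal Z_{K_{W}})$. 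The plan is to keep $K_{W}$ an induced subcomplex after each truncation and propagate the product by functoriality: for an induced subcomplex one has maps $\mathcal Z_{K_{W}}\xrightarrow{\,i\,}\mathcal Z_{\vc^{k}(P)}\xrightarrow{\,r\,}\mathcal Z_{K_{W}}$ with $r\circ i\simeq\mathrm{id}$, so $i^{*}r^{*}=\mathrm{id}$. By naturality of Massey products $\langle r^{*}\alpha_{1},r^{*}\alpha_{2},r^{*}\alpha_{3}\rangle$ is defined and cannot contain $0$, for otherwise $i^{*}$ would place $0$ inside $\langle i^{*}r^{*}\alpha_{1},i^{*}r^{*}\alpha_{2},i^{*}r^{*}\alpha_{3}\rangle=\langle\alpha_{1},\alpha_{2},\alpha_{3}\rangle$, forcing the strictly defined class to vanish. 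Hence $\mathcal Z_{\vc^{k}(P)}$ is nonformal, (a), (b), (c) all fail, and the equivalence holds in this case as well.

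The remaining geometric input is that $(K_{\vc^{k}(P)})_{W}=K_{W}$ for every $k$ and every choice of truncated vertices. A vertex truncation replaces a facet $\sigma$ of the current $(n-1)$-sphere by the cone $v*\partial\sigma$ on a new vertex $v$; it adds no edge between old vertices, so each $J_{j}$ remains a minimal non-face and the induced subcomplex on $W\subseteq[m]$ is unchanged unless some subdivided facet satisfies $\sigma\subseteq W$. Here flagness of graph-associahedra is decisive: $\sigma$ is a clique while $J_{1},J_{2},J_{3}$ are non-edges, so a clique inside $W$ meets each $J_{j}$ in at most one vertex, whence $|\sigma|\le 3$; as $|\sigma|=n$, no such $\sigma$ exists once $n\ge 4$ and $K_{W}$ persists verbatim. \textbf{The main obstacle is the low-dimensional case $n=3$}, i.e.\ $\Gamma$ a connected graph on four vertices: the five non-$K_{4}$ graphs (treated by a $6$-vertex witness) together with the permutohedron $P=\Pe^{3}$, whose witness from Lemma~\ref{MasseyGA}(2) has ten vertices. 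There a subdivided triangle can lie inside $W$, destroying the fixed witness, and I expect to resolve this by a finite check: after each such truncation one re-selects a witness involving the new vertex $v$ (joined to all of $\sigma$) and verifies that a nontrivial strictly defined triple Massey product survives across the finitely many local configurations that arise.
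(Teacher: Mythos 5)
Your overall architecture matches the paper's: the formal case is handled exactly as in the text (a vertex cut of a polygon is again a polygon, so $\mathcal Z_{\vc^k(P)}$ is a connected sum of sphere products by~\cite{bo-me06,mcga79}, hence formal), and the nonformal case is handled by propagating the nontrivial triple Massey product of Lemma~\ref{MasseyGA} through the vertex truncations, using the identification of a vertex cut with a stellar subdivision of a maximal simplex of $K_P$. However, your persistence argument demands more than is available: you need the full induced subcomplex $K_W$ on the witness set $W=J_1\sqcup J_2\sqcup J_3$ to survive each truncation verbatim, and you correctly observe that this fails precisely when a facet $\sigma$ with $\sigma\subseteq W$ gets subdivided, which can happen for $n=3$. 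But $n=3$ is not a marginal case here --- it covers \emph{every} connected graph $\Gamma$ on four vertices, i.e.\ all of the minimal nonformal graph-associahedra (the five Denham--Suciu obstruction graphs and the permutohedron $\Pe^3$ of Lemma~\ref{MasseyGA}(2)). Deferring this to an unexecuted ``finite check'' leaves the contrapositive of (c)$\Rightarrow$(a) unproved in exactly the cases that carry the content of the corollary.

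The paper closes this by a weaker but sufficient invariance: a stellar subdivision of a maximal simplex $\sigma$ with $\dim\sigma=n-1\geq 2$ removes only the top face $\sigma$ from the old vertex set, so the $1$-skeleton of $K_{\vc(P)}$ restricted to the original vertices coincides with that of $K_P$; and both Massey products of Lemma~\ref{MasseyGA} are governed by this graph alone (for (1) this is the combinatorial criterion of~\cite{DS}, and for (2) the classes and the vanishing/nonvanishing conditions are read off from $\widetilde H^0$ and $\widetilde H^1$ of induced subcomplexes via Theorem~\ref{zkcoh}). If you want to salvage your functorial argument instead, note that $(K_{\vc(P)})_W$ is still an induced subcomplex, equal to $K_W$ with at most a top-dimensional $2$-simplex deleted; deleting a $2$-simplex leaves all $\widetilde H^0$ groups (hence the definedness conditions) unchanged and can only enlarge $\widetilde H^1$ (same coboundaries, more cocycles), so the nonzero product class cannot die. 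Either repair is needed before the proof is complete.
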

\begin{proof}
To prove the equivalence of (a) and (b) it suffices to show that (a) implies (b).
By Theorem~\ref{mainFormal} formality of $\zp$ over a connected graph $\Gamma$ implies that $P$ of dimension $n\geq 2$ is either a pentagon, or a hexagon. Then $\vc^{k}(P)$ with $k\geq 0$ is an $m$-gon with $m\geq 5$ and by~\cite{mcga79} $\mathcal Z_{\vc^{k}(P)}$ is a connected sum of sphere products with two spheres in each product thus being a formal space.

To prove the equivalence of (a) and (c) it suffices to show that (c) implies (a). Assume the converse; then by Theorem~\ref{mainFormal} there exists a nontrivial triple Massey product in $H^*(\zp)$ described in Lemma~\ref{MasseyGA}. A vertex cut of a simple polytope $P$ is equivalent to a stellar subdivision of a maximal simplex of $K_P$. It follows that the induced graph in $K_{\vc(P)}$ on the same vertex set as $K_P$ coincides with the graph (i.e., 1-skeleton) of $K_P$. Observe that both triple nontrivial Massey products described in (1) and in (2) of Lemma~\ref{MasseyGA} depend only on the graph of $K_P$. Therefore, $H^*(\mathcal Z_{\vc^{k}(P)})$ has a nontrivial triple Massey product and thus $\mathcal Z_{\vc^{k}(P)}$ can not be formal (for any $k\geq 0$). We get a contradiction, which finishes the proof.
\end{proof}

\end{document}